\documentclass{amsart}
\usepackage{amsmath,amsfonts,euscript,amssymb,amsthm,graphicx,subfigure,verbatim, esint}
\usepackage{mathrsfs}
\usepackage[active]{srcltx}
\usepackage[colorlinks,linkcolor={blue},
citecolor={blue},urlcolor={blue},]{hyperref}
\usepackage{hyperref}

\newtheorem{theorem}{Theorem}
\newtheorem{corollary}{Corollary}
\newtheorem{lemma}{Lemma}
\newtheorem{proposition}{Proposition}

%%%%% Bold SubSubSections %%%%%

\makeatletter
\renewcommand{\subsubsection}{\@startsection{subsubsection}{3}{\z@}%
  {0.8ex \@plus 0.5ex \@minus 0.2ex}%
  {0.5ex \@plus 0.2ex}%
  {\normalfont\bfseries}}
\makeatother

%%%%%%%%%% Fractional Analysis %%%%%%%%%%

\newcommand{\semiGroup}[1]{S(#1)}

% Betrag mit "od"-Index
\newcommand{\absOd}[1]{\abs{#1}_{od}}

% Fractional Gradient für 1/2
\newcommand{\FG}{d_{\frac{1}{2}}}

% Fractional Gradient für s

% Fractional Laplacian für 1/2
\newcommand{\FLhalf}{(-\Delta)^{\frac{1}{2}}}

% Fractional Laplacian für 1/4
\newcommand{\FLquarter}{(-\Delta)^{\frac{1}{4}}}

% Allgemeiner Fractional Laplacian für alpha
\newcommand{\FL}[1]{(-\Delta)^{#1}}

%%%%%%%%%% Differential Operators %%%%%%%%%%

% Allgemeines Differential
\renewcommand{\d}{\, \mathrm{d}}

% Spezifische Differentiale
\newcommand{\dx}{\d x}
\newcommand{\dy}{\d y}
\newcommand{\dz}{\d z}
\newcommand{\dt}{\d t}
\newcommand{\ds}{\d s}

\renewcommand{\dh}{\d h}

%%%%%%%%%%%%%%%%%NEWCOMMANDS%%%%%%%%%%%%%%%%%%%%%

\newcommand{\R}{\mathbb{R}}

\newcommand{\N}{\mathbb{N}}

\newcommand{\Sm}{\mathbb{S}^{m-1}}

\newcommand{\scp}[2]{\left\langle #1, #2 \right\rangle}

\newcommand{\eps}{\varepsilon}
\newcommand{\del}{\partial}

\newcommand{\norm}[2]{\left\| #1 \right\|_{ #2 }}
\newcommand{\tnorm}[2]{\left\lvert\!\left\lvert\!\left\lvert #1 \right\rvert\!\right\rvert\!\right\rvert_{#2}}
\newcommand{\snorm}[2]{\left[ #1 \right]_{#2}}
\DeclareMathOperator*{\esssup}{ess\,sup}

\newcommand{\abs}[1]{\left\vert #1 \right\vert}

\begin{document}

\title[Well-posedness of half-harmonic map heat flows]{Well-posedness of half-harmonic map heat flows for rough initial data}

\author{Kilian Koch}
\thanks{Kilian Koch (corresponding author): Lehrstuhl für Angewandte Analysis, RWTH Aachen University, Kreuzherrenstr. 2, 52062 Aachen, Germany. Email: \texttt{koch@math1.rwth-aachen.de}. Phone: +49 241 80-94586.}

\author{Christof Melcher}
\thanks{Christof Melcher: Lehrstuhl für Angewandte Analysis, RWTH Aachen University, Kreuzherrenstr. 2, 52062 Aachen, Germany. Email: \texttt{melcher@rwth-aachen.de}. Phone: +49 241 80-94585.}

\date{\today}

\begin{abstract}
   We adopt the Koch-Tataru theory for the Navier-Stokes equations, based on Carleson measure estimates, to develop a scaling-critical low-regularity framework for half-harmonic map heat flows. This nonlocal variant of the harmonic map heat flow has been studied recently in connection with free boundary minimal surfaces. We introduce a new class of 
   initial data for the flow, broader than the conventional energy or Sobolev spaces considered in previous work, for which we establish existence, uniqueness, and continuous dependence. The class particularly includes homogeneous initial data that give rise to self-similar expanders.
\end{abstract}

\maketitle

\section{Introduction}
Fractional harmonic maps and flows arise at the intersection of several areas of mathematics, appearing as continuum limits of certain spin systems and in connection with the Plateau problem for disc-type minimal surfaces. Fractional harmonic maps generalize classical harmonic maps by replacing the governing Dirichlet energy with a fractional version 
introducing non-local interactions between points at various scales, see e.g. \cite{DaLio, DaLioRiviere, MillotSire, Schikorra2012}. Notably, fractional harmonic flows are governed by evolution equations that model the time evolution of maps under non-local diffusion processes. Due to their deep and unexpectedly intricate mathematical structure, fractional harmonic flows have recently attracted significant attention in the geometric analysis community, see e.g. \cite{Lenzmann2018, Struwe2024, Wettstein2021, Wettstein2022, Wettstein2023, Wright2024}. In this work, we consider the initial value problem for half-harmonic map heat flows from euclidean spaces into spheres $u:\R^n \times (0,T) \to \Sm$, governed by the equation
\[
\del_t u + (-\Delta)^{\frac 1 2} u \perp T_u \Sm
\]
and initial conditions $a:\R^n \to \Sm$ in suitable function spaces. Here $(-\Delta)^{\frac 1 2}$ denotes the usual fractional Laplacian defined, e.g., via Fourier transform.
The equation corresponds to the $L^2$ gradient flow of the square of the fractional Sobolev norm
\begin{equation} \label{eq:energy}
\mathcal{E}_{\frac 1 2} (u) = \frac{\gamma_n}{4} \int_{\R^n} \int_{\R^n} \frac{|u(x) - u(y)|^2}{|x-y|^{n+1}} \, dx dy. 
\end{equation} 
where $\gamma_n := \Gamma((n+1)/2)/\pi^{(n+1)/2}
$. The abstract weak form of the flow equation reads
\[
\scp{\del_t u}{\phi}_{L^2} + \mathcal{E}_{\frac 1 2}'(u)\langle \phi \rangle =0 
\] 
for all smooth tangent fields $\phi \perp u$. Fundamental questions of the existence and uniqueness of half-harmonic flows from $\mathbb{S}^1$ have been examined in \cite{Wettstein2021, Wettstein2022, Wettstein2023, Struwe2024, Wright2024}, including local well-posedness in the optimal Sobolev space $H^{\frac 1 2}$ and a bubbling analysis at possible blow-up points that allows for an extension to a global weak solution in full analogy to the well-known result for the conventional harmonic map heat flow from surfaces \cite{Struwe1985}. Infinite time blow-up has indeed been established in \cite{SireWeiZheng} with a conjecture of non-occurance of finite time singularities
for half-harmonic map heat flows as purely non-local phenomenon. Existence and partial regularity results for global weak solutions in higher dimensions were obtained in \cite{Schikorra2017, HyderSireWang}. 
\\\\
In this work, we shall examine well-posedness in more general class
scale invariant spaces larger than the energy space $\dot{H}^{\frac 1 2}(\R; \Sm)$, or more generally the critical Sobolev $\dot{H}^{\frac n 2}(\R^n; \Sm)$ as examined in \cite{MelcherSakellaris, KK2024}. One motivation arises from situations which require rougher initial conditions, such as homogeneous fields that involve point or jump discontinuities at the origin. These are crucial for the construction of self-similar expanders, as explored in \cite{GermainRupflin, DeLaire} for the conventional harmonic map heat flow and the Landau-Lifshitz-Gilbert equation, respectively.
% \\\\
% The existence of expanders follows naturally from the scaling structure.  
% If $u$ is a solution, then the rescaled profile $u_\lambda(x,t) := u(\lambda x, \lambda t)$  
% is again a solution corresponding to the same $0$-homogeneous initial data.  
% By uniqueness, this invariance under rescaling leads to the relation  
% $u(x,t) = u(x/t,1)$, which expresses the self-similar form of expanders.
% \\\\
Another motivation of more general character is to explore the conditions under which solutions exist or may fail to exist, as well as under what conditions solutions remain smooth or develop singularities, a well-known program in the context of the Navier-Stokes equation. A somewhat optimal framework established in \cite{KochTataru} is to exploit the connection between heat semigroups and $\mathrm{BMO}$ spaces through their maximal characterization in terms of Carleson measures \cite{Stein1993, GrafakosLoukas2014, LemariePierre2002}. The concept was also realized for a wide range of geometric flows \cite{KochLamm} and other nonlinear parabolic evolution equation, see e.g. \cite{BloemkerRomito, DeLaire, Seis, LiMelcher}, 
including in particular harmonic map heat flows and their coupling to fluid dynamic equations in models of liquid crystals \cite{Wang2010, Wang2012}. For the harmonic map heat flow
$\del_t u -\Delta u= |\nabla u|^2 u$ admissible initial conditions
$a: \R^n \to \R^m$ 
are characterized via their caloric extension $u: \R^{n+1}_+ \to \R^m$, i.e. $u(t)=g_t \ast a$ where $g_t$ is the heat kernel, so that, for some time horizon $T>0$, local energy averages
\[
\sup_{0<r<\sqrt{T}}\sup_{x \in \R^n} \int_0^{r^2} \fint_{B_r(x)} |\nabla u|^2
\]
satisfy a certain smallness condition. This is nothing but a local Carleson condition on the energy density of $u$ considered as a measure on $\R^{n+1}_+$. In the global case $T=\infty$ it provides a homogeneous norm equivalent to the $\mathrm{BMO}$ norm. \\\\
In the half-harmonic case, we adopt a similar strategy. However, instead of the caloric extension, we consider the harmonic extension $v: \R^{n+1}_+ \to \R^m$ given by $v(t)=p_t \ast a$ where $p_t$ is the Poisson kernel for $t>0$. Then
$v$ is the solution to the homogeneous linear problem $\del_t v +(-\Delta)^{\frac 1 2}v=0$ with $v(0)=a$. The fractional energy density is directly derived from the expression for the fractional energy $\mathcal{E}_{\frac 1 2}(v)$ in \eqref{eq:energy}
\begin{equation} \label{eq:fsq}
\absOd{\FG v}^2(x) = \frac{\gamma_n}{2}\int_{\R^n} \frac{|v(x+h)-v(x)|^2}{|h|} \frac{dh}{|h|^n}.
\end{equation}
The notation refers to the off-diagonal norm (see section \ref{sec:frac})
%\footnote{$\absOd{F}^2(x) =\int_{\R^n} |F(x,y)|^2 \frac{dy}{|x-y|^n}$} 
of the fractional gradient
\begin{equation}  \label{eq:frg}
\FG v(x,y) =\sqrt{ \frac{\gamma_n}{2}} \frac{v(x)-v(y)}{|x-y|^{\frac 1 2}} 
\end{equation}
introduced in \cite{Mazowiecka2018} to capture compensated compactness and improved regularity phenomena in fractional harmonic maps. Notably, as observed and used in \cite{Wettstein2022}, the half-harmonic map heat flow equation takes the explicit form
\begin{equation} \label{eq:HHMHF1}
    \del_t u + (-\Delta)^{\frac 1 2} u = \absOd{\FG u}^2 u. 
\end{equation}
This motivates the following semi-norm on initial conditions $a \in L^\infty(\R^n; \R^m)$
\begin{equation} \label{eq:semiA}
[a]_{\mathcal{A}_T}^2:= \sup_{0<r<T}\sup_{x \in \R^n} \int_0^r \fint_{B_r(x)} \absOd{\FG v}^2 
\quad \text{where} \quad v= p_t \ast a 
\end{equation}
for $T \in [0, \infty]$ 
to define scaling-invariant function spaces  
\begin{equation}\label{eq:A}
\mathcal{A} = \{ a \in L^\infty(\R^n; \R^m): [a]_{\mathcal{A}_\infty} < \infty\} 
\end{equation}
and
\begin{equation}\label{eq:V}
\mathcal{V}= \{ a \in \mathcal{A}: \lim_{T \to 0}[a]_{\mathcal{A}_T} =0 \}.
\end{equation}
The spaces $\mathcal{A}$ and $\mathcal{V}$ replace $\mathrm{BMO}$ and $\mathrm{VMO}$, respectively, but strictly include the scaling-invariant Sobolev and Besov spaces, i.e. 
\begin{equation} \label{eq:Sobolev_Besov}
L^\infty \cap \dot{H}^{\frac n 2}(\R^n;\R^m) \subset L^\infty \cap \dot{B}^{\frac n 2}_{2, \infty}(\R^n;\R^m) \subset \mathcal{A}.
\end{equation}
More precisely, $\mathcal{A}$ contains the $Q_{\alpha}$ spaces for $\alpha \ge 0$ as introduced in \cite{EssenJansonPengXiao2000}. 
Those spaces capture fractional mean oscillation and serve as a bridge between $\mathrm{BMO}$ and H\"older-type regularity, though they are not interpolation spaces in the classical sense. A central feature of the $Q_\alpha$
framework is its characterization via Carleson measures $\alpha \in (0,1)$. 
Buiding on this characterization the well-posedness of certain fractional variants of the Navier-Stokes equation in generalized $Q$ spaces have been investigated in \cite{LiZhai2010}.
Notably, the limiting case \( Q_0 \), captured by our results, forms a proper subspace of \( \mathrm{BMO} \), offering a slightly more refined control of oscillation. In particular, we observe that the space \( \mathcal{A} \) permits jump discontinuities in one dimension and point defects in higher dimensions.\\
%as by interpolation $L^\infty \cap BV(\R) \subset L^\infty \cap \dot{B}^{\frac 1 2}_{2, \infty}(\R)$ see e.g. \cite{ChoksiKohnOtto, CohenDahmenDauberchiesDeVore}. 
Our key result establishes solvability of \eqref{eq:HHMHF1} for initial conditions $a:\R^n \to \Sm$ in $\mathcal{A}$ with $[a]_{\mathcal{A}_T}$ sufficiently small up to time $T$ in certain spaces $X_T$ to be defined below. In particular we obtain local existence for $a \in \mathcal{V}$ and global existence if $[a]_{\mathcal{A}}$ is sufficiently small. Moreover, uniqueness holds in the space \( X_T \), with continuous dependence on the initial data.\\
Our approach is primarily based on real space methods, i.e,  on the fractional gradient rather than the fractional Laplacian. The specific choice of the spaces $\mathcal{A}$ and $X_T$ is particularly well-suited to the structure of the nonlinearity in \eqref{eq:HHMHF1} involving \( \FG \). The operator \( \FLquarter \) is less compatible in this context: unlike the fractional gradient, it lacks a clean product structure and does not interact well with the nonlinear term of the PDE. This complicates requisite estimates for the Picard iteration and makes the use of \( \FG \) the more natural choice for the analysis of \eqref{eq:HHMHF1}. Merging fractional calculus and Carleson measure estimates, a central analytic tool is the decomposition of the fractional gradient into local and non-local parts in order to overcome difficulties caused by slow decay in connection with the Poisson integral and Gagliardo-type interpolation
estimates in terms of the half-gradients.

\section{Main Results}

\subsection{Function spaces}

Based on the notion of fractional gradients \eqref{eq:fsq} we define, for measurable functions $u:\R^n \times (0,T) \to \R^m$, the seminorms 
\begin{align*}
    \snorm{u}{X_{T}^{(0)}} &= \esssup_{\substack{0 < t < T \\ x \in \R^n}} t^{\frac{1}{2}} \absOd{\FG u}(x,t) \\ &+ \esssup_{\substack{0 < t < T \\ x \in \R^n}} \left(\int_0^t \fint_{B_t(x)} \absOd{\FG u}^2(y,s) \dy \ds\right)^{\frac{1}{2}}\\
    \snorm{u}{X_{T}^{(1)}} &= \esssup_{\substack{0 < t < T \\ x \in \R^n}} t^{\frac{3}{2}} \absOd{\FG \nabla u}(x,t) \\
    &\quad + \esssup_{\substack{0 < t < T \\ x \in \R^n}} \left(\int_0^t \fint_{B_t(x)} s^2 \absOd{\FG \nabla u}^2(y,s) \dy \ds\right)^{\frac{1}{2}}
\end{align*}
and the norm
\[
\tnorm{u}{X_T} = \norm{u}{X_T} + \snorm{u}{X_T} \quad \text{where} \quad  \norm{u}{X_T} = \esssup_{\substack{0 < t < T \\ x \in \R^n}} \abs{u(x,t)}
\]
giving rise to Banach spaces 
\[
    X_T = \{u : (0,T) \times \R^n \to \R^m \text{ measurable} \mid \tnorm{u}{X_T} < \infty\}
\]
and
\[
   X_{0,T} = \{u \in X_T \mid \lim_{R \to 0}\snorm{u}{X_R} = 0\}.
\]
The translation and dilation symmetry for half-harmonic map heat flows
\[
u(x,t) \mapsto u(x_0+\lambda x, \lambda t) 
\quad \text{for } x_0 \in \R^n  \text{ and }\lambda>0 
\]
defines an isometry between $X_T$ and $X_{T/\lambda}$.
In the context of $\mathcal{A}$ and $\mathcal{V}$ defined in \eqref{eq:A} and \eqref{eq:V}, the spaces occur as extension spaces and serve as solution spaces for \eqref{eq:HHMHF1}. In this framework we obtain the following results:

\subsection{Well-posedness}

\begin{theorem}[Existence]\label{thm:existenceSph}
    There exists $\varepsilon > 0$ such that for all $a \in \mathcal{A}$ with $\snorm{a}{\mathcal{A}_T} < \varepsilon$ and $\abs{a}^2 = 1$, there exists $u \in X_T$ satisfying $\abs{u}^2 = 1$ and solving 
    \begin{align}\label{eq:HHMHF}
        \begin{cases}
            \partial_t u = \FLhalf u + u \absOd{\FG u}^2, \\
            u(0) = a,
        \end{cases}
    \end{align}
    with initial data is attained in the sense of Schwartz distributions. 
    In particular, for all $a \in \mathcal{V}$, there exists some $T > 0$ and a solution $u \in X_{0,T}$ with $u(0)=a$. 
%   satisfying
%    \begin{align}\label{eq:X^0}
%        \limsup_{R \to 0}\snorm{u}{X_R} = 0.
%    \end{align}
\end{theorem}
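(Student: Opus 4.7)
My plan is to solve \eqref{eq:HHMHF} by Picard iteration in $X_T$, in the spirit of Koch--Tataru. Writing the equation in Duhamel form as
\[
u(t) = p_t \ast a + \int_0^t e^{-(t-s)\FLhalf}\bigl(u\,\absOd{\FG u}^2\bigr)(s)\,\ds =: L(a)(t) + B(u)(t),
\]
the whole construction reduces to a linear estimate $\tnorm{L(a)}{X_T} \lesssim \norm{a}{L^\infty} + \snorm{a}{\mathcal{A}_T}$ together with a cubic bound $\tnorm{B(u)}{X_T} \lesssim \tnorm{u}{X_T}^3$ and its multilinear polarization needed for the Lipschitz step. These put the map $u \mapsto L(a) + B(u)$ into the contraction regime on a ball of radius $\mathcal{O}(\varepsilon)$ in $X_T$ whenever $\snorm{a}{\mathcal{A}_T} < \varepsilon$ with $\varepsilon$ small, and continuous dependence follows by differencing two iterates. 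The linear bound is essentially built into the definition of $\mathcal{A}_T$: the Carleson component of $\snorm{L(a)}{X_T^{(0)}}$ is $\snorm{a}{\mathcal{A}_T}$ by definition, the $L^\infty$ piece is free since $p_t$ is a probability kernel, and the pointwise $t^{1/2}\absOd{\FG L(a)}$ bound follows from a Campanato-type telescoping of the Carleson input on parabolic cylinders, with an analogous argument for the $X_T^{(1)}$ part after commuting $\nabla$ with the Poisson semigroup.

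\textbf{Cubic estimate.} This is the core of the argument. To control $\absOd{\FG B(u)}(x,t)$ I split the $h$-integral defining $\absOd{\FG \cdot}$ at the parabolic scale $|h|\sim \sqrt{t-s}$, decomposing $\FG$ into local and nonlocal parts as announced in the introduction. On the local part, pointwise bounds $\absOd{\FG u} \lesssim s^{-1/2}\tnorm{u}{X_T}$ combine with the smoothing effect of $e^{-(t-s)\FLhalf}$ acting on the bounded factor $u$. On the nonlocal part, a Gagliardo-type interpolation balances the pointwise $t^{1/2}\absOd{\FG \cdot}$ norm against the Carleson $L^2$ part of $\tnorm{\cdot}{X_T}$, and the slow polynomial decay of $p_{t-s}$ is absorbed by the extra factor $|h|^{-1}$ present in $\absOd{\FG \cdot}$. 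The Carleson component of the target norm is then obtained by integrating these estimates over cylinders $(0,r)\times B_r(x)$ and applying Schur / Cauchy--Schwarz together with the Carleson inputs on $u$; the $X_T^{(1)}$ terms involving $\nabla$ are handled by weighted analogues of the same scheme.

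\textbf{Sphere constraint and initial data.} For a solution $u$ produced above, set $\rho := |u|^2 - 1$. The pointwise identity
\[
u(x) \cdot \FLhalf u(x) = \tfrac{1}{2}\FLhalf|u|^2(x) + \absOd{\FG u}^2(x),
\]
which follows from $u(x)\cdot(u(x)-u(y)) = \tfrac{1}{2}|u(x)-u(y)|^2 + \tfrac{1}{2}(|u(x)|^2-|u(y)|^2)$, combined with \eqref{eq:HHMHF}, reduces the evolution of $\rho$ to the linear problem
\[
\partial_t \rho + \FLhalf \rho = 2\rho\,\absOd{\FG u}^2, \qquad \rho(0)=0.
\]
A uniqueness argument for this linear equation under the smallness of $\snorm{u}{X_T}$ forces $\rho\equiv 0$, so $|u|\equiv 1$ automatically. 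Attainment of $a$ in $\mathcal{D}'(\R^n)$ as $t\to 0$ reduces to $L(a)(t) \to a$, classical for the Poisson semigroup, together with $B(u)(t)\to 0$, which follows by duality against a Schwartz test function using the Carleson control of $\absOd{\FG u}^2\,\dx\,\ds$. For $a\in\mathcal{V}$, one chooses $T$ small enough that $\snorm{a}{\mathcal{A}_T}<\varepsilon$; the inclusion $L(\mathcal{V})\subset X_{0,T}$ and the self-mapping property of $B$ on $X_{0,T}$ then produce a solution in $X_{0,T}$.

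\textbf{Main obstacle.} The bottleneck is unmistakably the cubic estimate. In contrast to the caloric-extension setting of Koch--Tataru and Koch--Lamm, the Poisson kernel decays only polynomially, so naive pointwise convolution bounds do not close; the local/nonlocal splitting of $\FG$ and the careful interpolation between the pointwise and Carleson components of $\tnorm{\cdot}{X_T}$ are precisely the devices designed to overcome this, and I expect most of the technical effort to live in that step.
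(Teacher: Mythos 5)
There is a genuine gap in the setup of the fixed-point argument, specifically in the claimed ``cubic bound $\tnorm{B(u)}{X_T} \lesssim \tnorm{u}{X_T}^3$.'' For sphere-valued data the quantity $\tnorm{u}{X_T} = \norm{u}{L^\infty} + \snorm{u}{X_T}$ is bounded \emph{below} by $\norm{u}{L^\infty} \approx 1$, so $\tnorm{u}{X_T}^3$ never becomes small and your ``contraction regime on a ball of radius $\mathcal{O}(\varepsilon)$ in $X_T$'' cannot hold: the free evolution $L(a)$ itself sits at distance $\approx 1$ from the origin in $\tnorm{\cdot}{X_T}$. What closes the scheme is a sharper estimate in which the bounded factor $u$ is kept as an $O(1)$ constant and only the \emph{seminorm} is squared, i.e.\ roughly $\tnorm{B(u)}{X_T} \lesssim \norm{u}{L^\infty}\snorm{u}{X_T}^2$. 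The paper achieves this by routing the Duhamel term through the auxiliary space $Y_T$ and, crucially, by replacing the nonlinearity $u\absOd{\FG u}^2$ with the truncated version $\varphi(u)\absOd{\FG u}^2$, where $\varphi \in C_c^\infty$ agrees with the identity on $B_{3/2}(0)$; this makes $\varphi(u_l)$ uniformly bounded along the whole iteration and yields Corollary~\ref{coro:quadratEstimate}, $\tnorm{\mathcal{N}_\varphi(u)}{X_T} \leq c_4\snorm{u}{X_T}^2$, with an analogous seminorm-only Lipschitz constant in Proposition~\ref{prop:localLipschitz}. The iteration then controls the seminorm of the iterates by $2\varepsilon$ while their $L^\infty$ norm remains $O(1)$, and the cutoff is removed a posteriori once $|u|\equiv 1$ (hence $|u|\le 3/2$) is established. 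You would need to introduce this device (or an explicit joint induction bounding $\norm{u_l}{L^\infty}$ and $\snorm{u_l}{X_T}$ simultaneously) for the Picard scheme to close; the cubic bound in the full norm, while true, is uninformative here.

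The remaining ingredients of your proposal align with the paper: the local/nonlocal splitting of $\FG$ at the parabolic scale and the interpolation between the pointwise and Carleson components of $\tnorm{\cdot}{X_T}$ are exactly the technical devices used in the $Y_T \to X_T$ estimate; the sphere constraint via $\rho = |u|^2 - 1$ and the pointwise identity $u\cdot \FLhalf u = \tfrac12 \FLhalf |u|^2 + \absOd{\FG u}^2$, followed by uniqueness for the linear drift equation, is precisely the paper's argument; and the attainment of initial data relies on the mild/weak equivalence (Proposition~\ref{prop:MildWeak}), as you suggest through duality. Your direct estimate of $B(u)$ in $X_T$ is stylistically different from the paper's factorization through $Y_T$, but the two are equivalent after observing $\norm{u\absOd{\FG u}^2}{Y_T} \lesssim \norm{u}{L^\infty}\snorm{u}{X_T}^2$.
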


The notion of a solution refers to the mild formulation of \eqref{eq:HHMHF} which turns out to be equivalent to the
weak formulation, see Proposition \ref{prop:MildWeak}. Certain regularity properties can directly be deduced from the equation. 
Since \(u \in X_T\), we infer that 
    \[
    \FLhalf u, \, \absOd{\FG u}^2 \in L^\infty((\varepsilon, T) \times \R^n)
    \]
 and hence $\partial_t u \in L^\infty((\varepsilon, T) \times \R^n)$ for all \(\varepsilon > 0\). Thus, we obtain
    \begin{equation} \label{eq:cont}
        u \in      C^0((0, T) \times \R^n; \Sm).
    \end{equation}
Moreover, we have continuity of the map \(R \mapsto \snorm{u}{X_{R}^{(0)}}\) which is an important element in proof of Theorem~\ref{thm:ContDep} below.
Finally, upgrading the fixed point argument to higher order norms $X_{T}^{(k)}$ one can expect smoothness $u \in C^\infty((0, T) \times \R^n; \Sm)$ similarly to the case of the Navier–Stokes equation addressed in~\cite{Germain2007}.

\begin{theorem}[Uniqueness]\label{thm:uniqueness}
    There exists $\delta > 0$ such that solutions $u_1, u_2 \in X_T$ to~\eqref{eq:HHMHF} with $u_1(0) = u_2(0) \in \mathcal{A}$ and 
    \begin{align*}
        \limsup_{R \searrow 0}\snorm{u_i}{X_R} \leq \delta \quad \text{for } i=1,2
    \end{align*}
    satisfy $u_1 \equiv u_2$ in $X_T$. 
\end{theorem}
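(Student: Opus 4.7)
My plan is to reduce the statement to a contraction-type estimate in $X_R$ on a short initial time interval and then propagate the coincidence to $[0,T]$ by an open-closed argument. First, I would pass to the mild formulation
\[
u_i(t) = e^{-t\FLhalf}a + \int_0^t e^{-(t-s)\FLhalf}\bigl(u_i\, \absOd{\FG u_i}^2\bigr)(s)\,\ds, \qquad i=1,2,
\]
and subtract. Setting $w := u_1-u_2$ and applying $|a|^2-|b|^2=(a-b)\cdot(a+b)$ inside \eqref{eq:fsq} yields the polarization identity
\[
\absOd{\FG u_1}^2 - \absOd{\FG u_2}^2 = \langle \FG w, \FG(u_1+u_2)\rangle_{od},
\]
whence
\[
u_1 \absOd{\FG u_1}^2 - u_2 \absOd{\FG u_2}^2 = \tfrac{1}{2}(u_1+u_2)\,\langle \FG w,\FG(u_1+u_2)\rangle_{od} + \tfrac{1}{2}\,w\bigl(\absOd{\FG u_1}^2+\absOd{\FG u_2}^2\bigr).
\]
Every summand on the right is trilinear, containing two fractional-gradient factors and one $L^\infty$-bounded factor.

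Feeding these two building blocks into the same bilinear and trilinear Duhamel estimates that underpin the existence proof, I expect
\[
\tnorm{w}{X_R} \le C \bigl(\snorm{u_1}{X_R} + \snorm{u_2}{X_R}\bigr)\tnorm{w}{X_R} + C\bigl(\snorm{u_1}{X_R}^2 + \snorm{u_2}{X_R}^2\bigr)\norm{w}{X_R},
\]
with $C$ depending only on the uniformly bounded quantity $\norm{u_1}{X_T}+\norm{u_2}{X_T}$. Since $\norm{w}{X_R}\le\tnorm{w}{X_R}$, the hypothesis $\limsup_{R\searrow 0}\snorm{u_i}{X_R}\le \delta$ permits me to choose $R_0>0$ so small that the combined prefactor lies below $1/2$; absorption then yields $\tnorm{w}{X_{R_0}}=0$, i.e.\ $u_1\equiv u_2$ on $[0,R_0]$.

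To extend the coincidence to all of $[0,T]$, I would run an open-closed argument on $S:=\{t\in[0,T]:u_1\equiv u_2\text{ on }[0,t]\}$. Closedness is immediate from the interior continuity \eqref{eq:cont}. For openness at $t_0\in S\cap[0,T)$, I would pass to the time translates $\tilde u_i(x,t):=u_i(x,t_0+t)$, which solve the same equation on $(0,T-t_0)$ with common initial datum $u_1(t_0)=u_2(t_0)$. Interior regularity — in particular the boundedness of $\absOd{\FG u_i}$ away from $t=0$, which follows from $u_i\in X_T$ via \eqref{eq:cont} — forces $\snorm{\tilde u_i}{X_\eta}\to 0$ as $\eta\searrow 0$, so the small-time step reapplies to the shifted problem and extends the agreement past $t_0$, contradicting maximality unless $S=[0,T]$.

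The hard part is the trilinear Duhamel estimate itself. Since $u_i$ is sphere-valued (or more generally $\norm{u_i}{X_T}$ is not small), a naïve contraction in the full norm $\tnorm{\cdot}{X_R}$ cannot succeed. The estimate must therefore be engineered so that the two $\FG$-factors in each summand absorb the smallness from $\snorm{u_i}{X_R}$, while the remaining map-valued factor is controlled only in $L^\infty$. This cubic bookkeeping, together with the polarization identity above, is what closes the contraction and extracts uniqueness purely from small off-diagonal fractional-gradient oscillation.
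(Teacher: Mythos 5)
Your proposal matches the paper's approach: both proofs combine the local Lipschitz estimate for the nonlinear Duhamel operator (Proposition~\ref{prop:localLipschitz}, whose constant $C_L$ depends only on the seminorms $\snorm{\cdot}{X_R}$, not the full norms — the crucial feature you correctly isolate) with an open-closed argument in time, the openness step being exactly your time-translation together with the observation that interior regularity of $X_T$ solutions forces $\snorm{\tilde u_i}{X_\eta}\to 0$ as $\eta\searrow 0$. The only minor imprecision is that in your intermediate estimate the coefficient of $\norm{w}{X_R}$ should really multiply $\tnorm{w}{X_R}$ (since $\nabla$ can hit $w$ in the $Y$-seminorm), but this is harmless as $\norm{\cdot}{X_R}\le\tnorm{\cdot}{X_R}$ and doesn't affect the absorption.
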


In particular, uniqueness holds true for solutions in $X_{0,T}$ and small solutions in $X_T$ as constructed in Theorem \ref{thm:existenceSph}. Theorem \ref{thm:uniqueness}, however, requires smallness only near $t=0$.

\begin{theorem}[Continuous dependence on initial data]\label{thm:ContDep}
    Let $a \in \mathcal{V}$ and suppose that $(a_n)_{n \in \N}\subset \mathcal{V}$ satisfies 
    \[
    \lim_{n \to \infty} \left( \norm{a_n - a}{L^\infty} +\snorm{a_n - a}{\mathcal{A}_\infty}\right) = 0.
    \]
    Then there exists $T > 0$ such that the corresponding unique solutions $u_n$ and $u$ belong to $X^0_{T}$ and satisfy 
    \[
    \lim_{n \to \infty} \left( \norm{u_n-u}{L^\infty} + \snorm{u_n - u}{X_T^{(0)}} \right) =  0.
    \]
\end{theorem}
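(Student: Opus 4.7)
The plan is to view Theorem~\ref{thm:ContDep} as a Lipschitz stability statement for the Picard fixed point underlying Theorem~\ref{thm:existenceSph}. Writing $\Phi_a(u):=e^{-t\FLhalf}a+N(u)$ with Duhamel term $N(u)(t):=\int_0^t e^{-(t-s)\FLhalf}\bigl(u\absOd{\FG u}^2\bigr)\ds$, the goal is to show that the solution map $a\mapsto u$ is Lipschitz from a neighborhood of $a\in\mathcal V$ in $L^\infty\cap\mathcal A_\infty$ into $L^\infty\cap X_{T}^{(0)}$.

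First I would secure a common window of existence. Since $a\in\mathcal V$, I can fix $T>0$ with $\snorm{a}{\mathcal A_T}<\varepsilon/3$, where $\varepsilon$ is the threshold of Theorem~\ref{thm:existenceSph}. The convergence $\snorm{a_n-a}{\mathcal A_\infty}\to 0$ then forces $\snorm{a_n}{\mathcal A_T}\le\snorm{a}{\mathcal A_T}+\snorm{a_n-a}{\mathcal A_\infty}<\varepsilon$ for all $n\ge N_0$, so Theorem~\ref{thm:existenceSph} produces $u_n\in X_T$ with the scale-invariant bound $\snorm{u_n}{X_T}\lesssim \snorm{a_n}{\mathcal A_T}\lesssim \varepsilon$; the continuity of $R\mapsto\snorm{u_n}{X_{R}^{(0)}}$ recorded after Theorem~\ref{thm:existenceSph} upgrades this to $u_n\in X_{0,T}$ since $a_n\in\mathcal V$. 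The finitely many indices $n<N_0$ are then handled by shrinking $T$ to lie inside each individual existence window provided by Theorem~\ref{thm:existenceSph}.

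Next, set $w_n:=u-u_n$. The mild formulation yields
\[
w_n=e^{-t\FLhalf}(a-a_n)+\bigl[N(u)-N(u_n)\bigr].
\]
For the linear term the Poisson maximum principle gives $\norm{e^{-t\FLhalf}(a-a_n)}{L^\infty}\le\norm{a-a_n}{L^\infty}$; the Carleson part of $\snorm{\cdot}{X_T^{(0)}}$ applied to $e^{-t\FLhalf}(a-a_n)$ equals $\snorm{a-a_n}{\mathcal A_T}$ by definition~\eqref{eq:semiA}, and the pointwise part $\sup t^{1/2}\absOd{\FG e^{-t\FLhalf}(a-a_n)}$ is controlled by the same quantity via the linear data-to-extension bound underlying Theorem~\ref{thm:existenceSph}. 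For the Duhamel term I would split
\[
u\absOd{\FG u}^2-u_n\absOd{\FG u_n}^2=w_n\absOd{\FG u}^2+u_n\bigl(\absOd{\FG u}^2-\absOd{\FG u_n}^2\bigr),
\]
together with $\absOd{\FG u}^2-\absOd{\FG u_n}^2\le\absOd{\FG w_n}\bigl(\absOd{\FG u}+\absOd{\FG u_n}\bigr)$ (the off-diagonal norm is a seminorm) to bring $w_n$ in linearly. Inserting this into the bilinear estimate that drives the Picard contraction in Theorem~\ref{thm:existenceSph} yields
\[
\norm{N(u)-N(u_n)}{L^\infty}+\snorm{N(u)-N(u_n)}{X_T^{(0)}}\le C\bigl(\snorm{u}{X_T}+\snorm{u_n}{X_T}\bigr)\bigl(\norm{w_n}{L^\infty}+\snorm{w_n}{X_T^{(0)}}\bigr),
\]
whose prefactor is $\le 1/2$ after further shrinking $\varepsilon$. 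Absorbing gives
\[
\norm{w_n}{L^\infty}+\snorm{w_n}{X_T^{(0)}}\lesssim\norm{a-a_n}{L^\infty}+\snorm{a-a_n}{\mathcal A_T}\longrightarrow 0.
\]

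The hard part is not the nonlinear estimate itself, which is the very one behind the existence proof, but arranging the uniform time window: I need all $u_n$ simultaneously in $X_{0,T}$ with uniformly small $\snorm{u_n}{X_T}$, so that a single Picard contraction constant controls the difference equation. This is precisely where the $\mathcal A_\infty$-convergence of $(a_n)$ (rather than mere $L^\infty$ convergence or separate $\mathcal V$-membership of each $a_n$) is essential, and the continuity of $R\mapsto\snorm{u_n}{X_{R}^{(0)}}$ is what lets me promote membership in $X_T$ to membership in $X_{0,T}$. Once this uniformity is in place, the remainder is a routine parameter-dependent Banach contraction argument.
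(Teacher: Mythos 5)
Your proposal follows the same architecture as the paper's proof: secure a common existence window by combining $a\in\mathcal V$ with $\snorm{a_n-a}{\mathcal A_\infty}\to 0$, split $u_n-u$ into a Poisson/linear part and a Duhamel difference, estimate the linear part via Lemma~\ref{lem:initialCond}, estimate the Duhamel part via the Lipschitz bound, and absorb. The one place your route diverges is in certifying uniform smallness of $\snorm{u_n}{X_T^{(0)}}$. You invoke ``$\snorm{u_n}{X_T}\lesssim\snorm{a_n}{\mathcal A_T}$'' as a byproduct of Theorem~\ref{thm:existenceSph}, but that theorem only asserts existence; the Picard iteration in its proof gives $\snorm{u_n}{X_T}\le 2\varepsilon$, not a linear-in-data bound. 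The paper obtains the latter explicitly from the quadratic inequality of Corollary~\ref{coro:quadratEstimate}, $0\le c_4\snorm{u_n}{X_T^{(0)}}^2-\snorm{u_n}{X_T^{(0)}}+\snorm{v_n}{X_{T_1}^{(0)}}$, together with continuity of $R\mapsto\snorm{u_n}{X_R^{(0)}}$ used to select the \emph{small} root --- that, not promoting $X_T$-membership to $X_{0,T}$ (which comes directly from the $\mathcal V$-clause of Theorem~\ref{thm:existenceSph}), is the role of the continuity remark. Since the bound $\le 2\varepsilon$ already makes the Lipschitz prefactor $<\tfrac{1}{2}$, your argument still closes; the branch-selection step is what backs the sharper bound you state. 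Lastly, both your Lipschitz estimate and the paper's $C_L^{(0)}$ carry only $X_T^{(0)}$-type seminorms on both sides; this does not follow verbatim from Proposition~\ref{prop:localLipschitz}, since the $Y_T$-norm of the nonlinearity difference introduces $\nabla$- and $\FG\nabla$-terms of $u-u_n$, and one must invoke the interpolation~\eqref{eq:snorminter} to reduce them --- a step your outline treats as automatic when citing ``the bilinear estimate that drives the Picard contraction.''
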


%Here the continuity of the map \(t \mapsto \snorm{u}{X_T^{(0)}}\) for $u \in X_T$ and $t<T$ is essential. 

\subsection{Relation with $Q$ and Besov spaces} 
The results extend the local and global well-posedness results 
\cite{KK2024, MelcherSakellaris} for initial data 
$a:\R^n \to \mathbb{S}^{m-1}$ in the critical Sobolev class $H^{n/2}$ which is contained in $\mathcal{V}$.
For maps from the unit circle $\mathbb{S}^1$, local well-posedness in the energy space $H^{1/2}$ has been obtained before in \cite{Wettstein2022, Wettstein2023, Struwe2024}, based on fractional calculus and extension via the Dirichlet-to-Neumann map, respectively, as part of a program on almost smooth global weak solutions. Our approach starts from fractional calculus and uses extension ideas in the spirit of maximal functions to capture rougher initial data that e.g exhibits some homogeneous structure. To this end 
we recall the notion of \( Q_\alpha \)-spaces defined for \(\alpha \in \R\) as the set of measurable functions \( f \) satisfying
\begin{align*}
    \snorm{f}{Q_\alpha}^2 := \sup_{\substack{x \in \R^n \\ r > 0}} \frac{1}{r^{n-2\alpha}} \int_{B_r(x)} \int_{B_r(x)} \frac{\abs{f(y) - f(z)}^2}{\abs{y-z}^{n+2\alpha}} \dy \dz < \infty.
\end{align*}
The function spaces $Q_\alpha(\R^n)$ were introduced and studied in \cite{EssenJansonPengXiao2000}.These spaces form a nested hierarchy: if $\alpha < \beta$, then $Q_\beta(\R^n) \subseteq Q_\alpha(\R^n)$. Moreover, they contain only constant functions when if $\alpha \ge 1$ and $n \ge 2$, or when $\alpha>1/2$ and $n=1$. In the case
 $\alpha < 0$, we have the identification $Q_\alpha(\R^n) = \mathrm{BMO}(\R^n)$. In the borderline case $\alpha=0$, we establish the embedding of $L^\infty \cap Q_0(\R^n)$ into our initial data space $\mathcal{A}$:
 
 \begin{theorem}\label{thm:Q0Included}
    There is constants $c>0$ such that  
    \begin{align*}
        \snorm{a}{\mathcal{A}_\infty} \leq  \snorm{a}{Q_0} \quad \text{for all} \quad a \in Q_0(\R^n).
    \end{align*}
\end{theorem}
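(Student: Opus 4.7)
The plan is to decompose $a$ into a localized piece, handled by Plancherel and the $L^2$-boundedness of the Poisson semigroup, and a far piece, handled by pointwise Poisson-kernel decay combined with the dyadic $L^2$ structure of $a$ coming from $Q_0$. First I would fix $r > 0$ and $x_0 \in \R^n$. Since both seminorms are invariant under subtracting constants from $a$, I may assume $\fint_{B_{4r}(x_0)} a = 0$. Then split $a = a_1 + a_2$ with $a_1 := a\, \chi_{B_{4r}(x_0)}$ and $a_2 := a - a_1$, giving a Poisson extension $v = v_1 + v_2$ and $\absOd{\FG v}^2 \le 2\absOd{\FG v_1}^2 + 2\absOd{\FG v_2}^2$. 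The $Q_0$-norm enters through two easy ingredients: the Poincar\'e-type bound $\int_{B_R}(a - a_{B_R})^2 \leq CR^n \snorm{a}{Q_0}^2$ (immediate from $|y-x| \leq 2R$ inside $B_R$) and the chain $|a_{B_{2^{k+1} r}} - a_{B_{4r}}| \lesssim k \snorm{a}{Q_0}$ (using $Q_0 \subset \mathrm{BMO}$), which combine to yield the dyadic bound $\int_{B_{2^k r}(x_0)} a^2 \lesssim k^2(2^k r)^n \snorm{a}{Q_0}^2$.

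For the local piece I would use Plancherel: since $\int_{\R^n} \absOd{\FG v_1}^2(\cdot, t)\, dx = c_n \int |\xi| e^{-2 t |\xi|} |\hat a_1(\xi)|^2 \, d\xi$, integrating in $t \in (0, r)$ and bounding $(1 - e^{-2 r |\xi|})/(2|\xi|) \leq 1/(2|\xi|)$ yields $\int_0^r \int_{\R^n} \absOd{\FG v_1}^2 \lesssim \|a_1\|_{L^2}^2 \lesssim r^n \snorm{a}{Q_0}^2$; dividing by $|B_r|$ controls the $v_1$-contribution by $\lesssim \snorm{a}{Q_0}^2$. For $v_2$, since $a_2$ vanishes on $B_{4r}(x_0)$, $v_2$ is smooth on $B_{3r}(x_0) \times (0, r)$, and a dyadic-shell computation using $|\nabla p_t(z)| \lesssim t/|z|^{n+2}$ (for $|z| \geq r \geq t$) combined with the dyadic $L^2$ bound produces the pointwise estimates
\[
|v_2(z, t)| \lesssim (t/r)\, \snorm{a}{Q_0}, \qquad |\nabla v_2(z, t)| \lesssim (t/r^2)\, \snorm{a}{Q_0},
\]
for $(z, t) \in B_{3r}(x_0) \times (0, r)$, where the geometric decay $2^{-k}$ absorbs the $k^2$ logarithmic factors. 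Splitting the $y = x + h$ integration in $\absOd{\FG v_2}^2$ into $y \in B_{3r}(x_0)$ or $y \notin B_{3r}(x_0)$, the near part is handled by the mean value theorem and direct integration, contributing $\lesssim \snorm{a}{Q_0}^2$.

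The hard part will be the far-field piece involving $|v_2(y, t)|^2$ for $y$ outside $B_{3r}(x_0)$; every other sub-term of $|v_2(y)-v_2(x)|^2 \leq 2|v_2(y)|^2 + 2|v_2(x)|^2$ is already controlled by the pointwise bound above. Here I plan to apply Jensen to write $|v_2(y, t)|^2 \leq (p_t \ast |a_2|^2)(y)$, interchange the order of integration, and exploit that $\int p_t(y - w)/|y - x_0|^{n+1}\, dy \lesssim |w - x_0|^{-(n+1)}$ for $|w - x_0| \geq 4r \geq 4t$ (since the mass of $p_t$ near $w$ stays in the annulus $|y - x_0| \gtrsim |w - x_0|$). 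This reduces matters to the weighted tail estimate
\[
\int_{|w - x_0| > 4r} \frac{|a(w)|^2}{|w - x_0|^{n+1}}\, dw \lesssim \snorm{a}{Q_0}^2/r,
\]
which follows from the dyadic $L^2$ bound on $a$, with $\sum_k k^2/2^k < \infty$ absorbing the BMO factors. Integration in $t \in (0, r)$ yields a contribution $\lesssim \snorm{a}{Q_0}^2$; assembling all pieces and taking the supremum over $(x_0, r)$ gives $\snorm{a}{\mathcal{A}_\infty} \leq c \snorm{a}{Q_0}$.
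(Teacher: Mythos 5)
Your proof is correct, and it reaches the paper's conclusion by a genuinely different route. You decompose the \emph{data} $a = a_1 + a_2$ into a piece localized on $B_{4r}(x_0)$ and a tail, normalized so $\fint_{B_{4r}(x_0)} a = 0$: the local piece is handled by Plancherel via $\int_{\R^n}\absOd{\FG v_1}^2\,dx = c_n \int |\xi|\,e^{-2t|\xi|}|\hat a_1(\xi)|^2\,d\xi$ together with the $Q_0$-Poincar\'e bound $\|a_1\|_{L^2}^2 \lesssim r^n\snorm{a}{Q_0}^2$; the tail is handled by Poisson-kernel decay, a mean-value estimate for small $h$, and a Jensen/weighted-tail argument for large $h$. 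This is in effect a direct transport of the classical Carleson-measure proof for $\mathrm{BMO}$ to the fractional-gradient setting. The paper decomposes the \emph{fractional gradient} instead, writing $\absOd{\FG(p_t*a)}^2 \lesssim (\absOd{\FG(p_t*a)}^{(0,t)})^2 + (\absOd{\FG(p_t*a)}^{(t,\infty)})^2$. The local radial piece is then folded into the classical Carleson-measure argument (Grafakos, Thm.~3.3.8), with Plancherel appearing there, after noting $(\absOd{\cdot}^{(0,t)})^2 \leq \absOd{\cdot}^2$ and using Lemma~\ref{lem:growthEstimate} for the growth part; the tail piece is pulled inside the convolution via $\absOd{\FG(p_t*a)}^{(t,\infty)} \leq p_t*\absOd{\FG a}^{(t,\infty)}$, again requiring a near/far split and Jensen, and is then reduced to a single integral by the Fubini identity $\int_0^1\chi_{|h|\geq t}\,dt = \min(|h|,1)$, whose weight $\min(|h|,1)\,|h|^{-n-1}$ interpolates between $|h|^{-n}$ (controlled directly by the $Q_0$ seminorm) and $|h|^{-n-1}$ (controlled by $\mathrm{BMO}$ and the logarithmic growth of averages). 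Both arguments use $Q_0 \subset \mathrm{BMO}$ and dyadic-shell estimates absorbing factors of $k^2 2^{-k}$. The paper's decomposition is more tailored to the $\absOd{\cdot}^{(r_0,r_1)}$ notation and the Fubini step is quite economical; yours is more self-contained and avoids invoking the external Carleson-measure theorem.
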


We have $Q_0(\R^n) \subsetneq \mathrm{BMO}(\R^n)$ (cf.  \cite{EssenJansonPengXiao2000} Remark 2.11), but we capture relevant cases of rough date
and a true extension of critical Sobolev well-posedness. In fact, from the embedding properties of $Q_\alpha$ spaces into homogeneous Besov spaces, also examined in \cite{EssenJansonPengXiao2000} Theorem 2.7, in particular,
see e.g. \cite{BerghLoefstroem, Yuan2010},
\[
\dot{B}^{\frac{n}{2}}_{2,\infty} (\R^n) \subset \dot{B}^{\frac{1}{2}}_{2n,\infty} (\R^n) \subset Q_0(\R^n), 
%\quad \text{for} \quad  \alpha < \min(1, \frac{n}{2}),
\]
we deduce well-posedness results in terms of this critical Besov norm weaker than the critical Sobolev norm $H^{n/2}$. In particular, for $n = 1$, taking into account the embedding properties of functions of bounded variation as outlined in \cite{ChoksiKohnOtto, CohenDahmenDauberchiesDeVore} 
\[
BV(\R)  \subset L^\infty \cap \dot{B}^{\frac{1}{2}}_{2,\infty} (\R),
\]
allows us to handle initial data that may include small jump discontinuities. Note, however, that for $a=a_0 \chi_{(-\infty, 0)} +a_1 \chi_{(0,\infty)} \in \mathcal{A}$ with constants $a_0,a_1 \in \mathbb{S}^{m-1}$, $v=p_t \ast a$ is self-similar, and hence the averages in
\eqref{eq:semiA} are independent of $r$. We infer that $a \notin \mathcal{V}$. 

\subsection{Homogenous initial data and self-similar expander}

% Neuer Anfang

In higher dimensions $n \ge 2$ our theory covers homogeneous initial data of the form 
\begin{equation} \label{eq:homogeneous_data}
a(x) = \varphi\!\left(\frac{x}{|x|}\right), \quad x \in \R^n \setminus \{0\},
\end{equation}
i.e. $a=\varphi \circ h$ where $\varphi:\mathbb{S}^{n-1} \to \mathbb{S}^{n-1}$ is a Lipschitz map and $h(x)=x/|x|$. Clearly, 
\[     
\snorm{a}{Q_0} \le  \mathrm{Lip}(\varphi) \snorm{h}{Q_0} \quad \text{while} \quad 
\snorm{h}{Q_0} \lesssim \snorm{h}{\dot{B}^{\frac n 2}_{2, \infty}}< \infty. \]
Indeed, applying the Fourier transform 
to $\eta(x) = \abs{x}$ and then to $\nabla \eta = h$ yields
\begin{align*}
    \mathcal{F}(h) = \text{p.v.} \left(   \frac{c_n \xi}{\abs{\xi}^{n+1}} \right),
\end{align*}
see \cite{Grafakos2014a}. Passing to the Paley-Littlewood decomposition, we obtain the estimate
\begin{align*}
    \norm{h * \psi_k}{L^2} \leq c 2^{-\frac{n k}{2}}
\end{align*}
for the $k$-th homogeneous dyadic block $\psi_k$, and hence $h \in \dot{B}^{\frac{n}{2}}_{2,\infty}(\mathbb{R}^n)$. 
Therefore, Theorem \ref{thm:existenceSph} and \ref{thm:uniqueness} apply for $\mathrm{Lip}(\varphi)$ sufficiently small and provide a unique global solution
$u$. However, since $v=p_t \ast a$ is self-similar, the $\mathcal{A}_T$ norm cannot decay, and we conclude that $a \not\in \mathcal{V}$. Moreover, scaling 
and uniqueness imply that $u(x,t) =u(\lambda x, \lambda t)$ for all
$(x,t) \in \R^n \times (0,\infty)$ and $\lambda>0$, and letting 
$\lambda = \frac{1}{t}$, we conclude as in \cite{DeLaire}:

% neues Ende!
\begin{theorem}[Self-similar expanders]
    There exist a constant $\ell = \ell(n)$ such that, if $a$ is of the form \eqref{eq:homogeneous_data} and $\mathrm{Lip}(\varphi) \leq \ell$, then the unique solution $u$ of \eqref{eq:HHMHF}, emanating from $a$, has the form
    \begin{equation}
        u(x,t) = u\left(\frac{x}{t},1\right) \quad  \text{ for all } (x,t) \in \R^n \times (0,\infty).
    \end{equation}
\end{theorem}

%\begin{proof}
%    Choosing $l$ small enough, Theorems \ref{thm:existenceSph} and %\ref{thm:uniqueness} provide a unique solution $u$ to the initial data $a$. %Through the scaling $u_\lambda(x,t) =u(\lambda x, \lambda t)$ we know that %$u_\lambda$ is also a solution to $a_\lambda(x) = a(\lambda x)$. Since %$a_\lambda = a$ by \eqref{eq:homogeneous_data} we now know that $u = u_\lambda$ %and thus by choosing $\lambda = \frac{1}{t}$
%    \begin{align*}
%        u(x,t) = u(\lambda x, \lambda t) = u\left(\frac{x}{t},1\right) := %u\left(\frac{x}{t}\right)
%    \end{align*}
%\end{proof}

%(cf. Stein \cite{Stein1993}, Theorem 1.10.1 in \cite{LemariePierre2002} or Theorem 3.3.8 in \cite{GrafakosLoukas2014}): 

\section{Preliminaries}

\subsection{Poisson kernel and fractional heat equation}
Recall that the Poisson kernel is given by   
\begin{align*}
 p_t(x) = \frac{1}{t^n} p\left(\frac{x}{t}\right)
 \quad \text{where} \quad    p(x) = \frac{\Gamma(\frac{n+1}{2})}{\pi^{\frac{n+1}{2}}} \frac{1}{(1 + \abs{x}^2)^{\frac{n+1}{2}}}
\end{align*}
It defines the semigroup \( \left(\semiGroup{t}\right)_{t \geq 0} \) generated by \(\FLhalf\), which acts via convolution
\begin{align*}
   ( \semiGroup{t} f) (x) = p_t * f (x) = \int_{\R^n} p_t(x - y) f(y) \dy
\end{align*}
and gives rise to a mild formulation for the initial value problem for the inhomogeneous linear fractional heat equation
\[
\del_t u + (-\Delta)^{1/2} u =f  \quad \text{with} \quad u(0)=a.
\] 
We also require a concept of weak solutions.
Accounting for the limited decay properties, we introduce the test space $\mathcal{T}$ consisting of all functions $\varphi \in C^\infty((0,T)\times\mathbb{R}^n)$ whose Fourier transforms can be represented as
    \[
        \hat{\varphi}(\xi,t) = f(|\xi|,t)\psi(\xi,t)
    \]
    where $f \in C^\infty(\mathbb{R}\times(0,T))$ and $\psi \in C^\infty(\mathbb{R}^n\times(0,T))$ satisfy:
    \begin{itemize}
        \item[(a)](temporal compactness) There exists a compact set $K \subset (0,T)$ such that
        \[
            \hat{\varphi}(\xi,t) = 0,\quad \text{for } t \notin K.
        \]
        \item[(b)] (spatial decay and smoothness) For any $\gamma \in C_c^\infty(\mathbb{R}^n)$ with $\gamma = 1$ near $0$
        \[
            (1-\gamma(\xi))\hat{\varphi}(\xi,t) \in \mathcal{S}((0,T)\times\mathbb{R}^n).
        \]
    \end{itemize}
The motivation for $\mathcal{T}$ is stability under relevant operations and spatial decay such as the Poisson kernel. Specifically, for any $\varphi \in \mathcal{T}$, we have the following:
\begin{align}
    \FLhalf \varphi &\in \mathcal{T}, \\
    p_t * \varphi &\in \mathcal{T}, \\
    \abs{\varphi(x)} &\leq \frac{c}{1+\abs{x}^{n+1}}. \label{eq:growthT}
\end{align}
It is clear that $\mathcal{T} \subset S_{\frac{1}{2}}$, as defined, for instance, in Lemma 12.2 of~\cite{Stinga2024}. The space $S_{\frac{1}{2}}$ provides a well-established framework for distributions related to the fractional Laplacian. However, while $S_{\frac{1}{2}}$ is constructed via the pointwise singular integral definition of the fractional Laplacian, the space $\mathcal{T}$ is explicitly defined through a Fourier space structure. This Fourier-based characterization renders $\mathcal{T}$ particularly suitable for our context, especially when dealing with operators such as the Paley–Littlewood trucations $P_\varepsilon$ projecting onto frequencies of size $\gtrsim 1/\varepsilon$. \\\\
Note that \eqref{eq:growthT} follows from applying the Fourier transform to~$\varphi$ and assuming without loss of generality that~$\psi \in C_c^\infty(\R^{n+1})$ with $f(0,t)=\partial_rf(0,t)=0$. Using Lemma~12.2 from~\cite{Stinga2024}, we obtain
\begin{align*}
    \abs{\FLhalf \hat{\psi}(x,t)} \leq \frac{c}{1+\abs{x}^{n+1}},
\end{align*}
which confirms the desired growth condition by integration by parts applied to~$\hat{\varphi}$.

\begin{proposition}\label{prop:MildWeak}
    Let $a \in L^1_{\mathrm{uloc}}(\mathbb{R}^n)$, and $f \in L^1_{\mathrm{uloc}}(\mathbb{R}^n; L^1(0,T))$. Then the following are equivalent:
    \begin{itemize}
        \item[(i)] $u$ is a \emph{mild solution} of the inhomogeneous fractional half-heat equation:
        \begin{align*}
            u(t) = \semiGroup{t} a + \int_0^t \semiGroup{t-s} f(s) \, ds.
        \end{align*}
        \item[(ii)]  $u \in L^\infty((0,T),L^1_{\mathrm{uloc}}(\mathbb{R}^n))$ is a \emph{weak solution} with $u(0) = a$ attained in the sense of Schwartz distributions, satisfying
        \begin{align*}
            \int_{\mathbb{R}^n \times (0,T)} \left( u \, \partial_t \varphi + u \, \FLhalf \varphi \right) \, dx \, dt
            =
            \int_{\mathbb{R}^n \times (0,T)} f \, \varphi \, dx \, dt,
        \end{align*}
        for all $\varphi \in \mathcal{T}$.
    \end{itemize}
    In either case we have that $u$ also attains its initial data in $L^1_{loc}(\R^n)$. \\
\end{proposition}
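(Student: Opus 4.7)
The plan is to use Poisson smoothing $u_\varepsilon := S(\varepsilon) u = p_\varepsilon \ast u$ as the bridge between both formulations, exploiting the stability $S(\varepsilon) : \mathcal{T} \to \mathcal{T}$ and the pointwise decay \eqref{eq:growthT} to justify limits. The uniform $L^1_{\mathrm{uloc}}$ boundedness of $S(t)$ (Young with $p_t \in L^1$) together with Minkowski for the Duhamel integral immediately supplies the regularity $u \in L^\infty((0,T); L^1_{\mathrm{uloc}})$ on the mild side.

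For (i) $\Rightarrow$ (ii), I would use the semigroup law to write $u_\varepsilon(t) = S(t+\varepsilon) a + \int_0^t S(t+\varepsilon - s) f(s)\,ds$, which is smooth in $x$ and satisfies the classical PDE $\partial_t u_\varepsilon + \FLhalf u_\varepsilon = S(\varepsilon) f$. Testing against $\varphi \in \mathcal{T}$ and integrating by parts---in $t$ using temporal compactness (condition (a)), in $x$ using the Fourier self-adjointness of $\FLhalf$ made available by the decomposition $\hat\varphi = f(\abs{\xi},t)\psi(\xi,t)$---yields the weak identity for $u_\varepsilon$ with forcing $S(\varepsilon) f$. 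Passing $\varepsilon \to 0$ by dominated convergence uses \eqref{eq:growthT} applied to $\varphi$, $\partial_t\varphi$, $\FLhalf \varphi$ (all in $\mathcal{T}$) as an integrable envelope against $u_\varepsilon \in L^\infty L^1_{\mathrm{uloc}}$, and similarly for the forcing term.

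For (ii) $\Rightarrow$ (i), set $w := u - u_{\mathrm{mild}}$; by the previous step $w$ weakly solves the homogeneous equation and $w(t) \to 0$ in $\mathcal{S}'(\R^n)$ as $t \to 0$. Pushing the semigroup onto $w$ through test functions $S(\varepsilon)\varphi$ (using stability of $\mathcal{T}$ under $S(\varepsilon)$), $w_\varepsilon := S(\varepsilon) w$ satisfies the same weak identity, and being smooth and bounded it solves $\partial_t w_\varepsilon + \FLhalf w_\varepsilon = 0$ classically. On the Fourier side, $\widehat{w_\varepsilon}(\xi,t) = e^{-t\abs{\xi}} g_\varepsilon(\xi)$ for some tempered $g_\varepsilon = \widehat{w_\varepsilon}(0^+)$, and the distributional initial condition combined with a Schwartz-cutoff approximation of $S(\varepsilon)\phi$ (whose $(1+\abs{x}^{n+1})^{-1}$-decay controls the tail against $L^1_{\mathrm{uloc}}$ uniformly in $t$) yields $g_\varepsilon \equiv 0$, hence $w_\varepsilon \equiv 0$; sending $\varepsilon \to 0$ gives $w = 0$ a.e. The $L^1_{\mathrm{loc}}$ attainment of the initial data then follows from the now-established mild formula: $S(t)a - a \to 0$ in $L^1_{\mathrm{loc}}$ by the approximate-identity property of $p_t$ on $L^1_{\mathrm{uloc}}$, and the Duhamel contribution is bounded in $L^1(B_R)$ by $C_R \int_0^t \norm{f(s)}{L^1_{\mathrm{uloc}}}\,ds \to 0$.

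The delicate step is the initial-data propagation in (ii) $\Rightarrow$ (i): the weak identity against $\mathcal{T}$-functions (temporally supported away from $0$) carries no direct information at $t=0$, and since $S(\varepsilon)\phi$ for $\phi \in \mathcal{S}$ fails to be Schwartz---it only enjoys the $(1+\abs{x}^{n+1})^{-1}$-decay of \eqref{eq:growthT}---transferring the distributional initial condition to the regularized $w_\varepsilon$ requires combining the Schwartz-distributional statement with tail estimates from the $L^1_{\mathrm{uloc}}$ control. The Fourier-adapted definition of $\mathcal{T}$ and its closure under $S(\varepsilon)$ and $\FLhalf$ are precisely tailored to make this tradeoff workable.
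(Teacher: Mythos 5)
Your strategy matches the one the paper references (Lemarié--Rieusset, Theorem~11.2) and you correctly identify the role of $\mathcal{T}$: the Poisson kernel's slow decay forces one out of $\mathcal{S}$, and the construction of $\mathcal{T}$ together with the envelope~\eqref{eq:growthT} and the closure under $S(\varepsilon)$ and $\FLhalf$ is exactly what compensates. Where you deviate is the mechanics. You mollify the \emph{solution} via $u_\varepsilon = S(\varepsilon)u$, whereas the paper's remark (``$\varphi * p_t \in \mathcal{T}$ if $\varphi\in\mathcal{S}(\R^n\times(0,T))$ has compact support in time'') signals the standard Lemarié--Rieusset device of transporting the \emph{test function} backward: one sets $\tilde\varphi(\cdot,\tau) = \int_\tau^T p_{s-\tau}*\varphi(\cdot,s)\,ds$ so that $(\partial_\tau - \FLhalf)\tilde\varphi = -\varphi$, and this $\tilde\varphi$ (which lands in $\mathcal{T}$ but not $\mathcal{S}$) plugged into the weak identity unwinds the Duhamel integral directly, bypassing the Fourier-side ODE for $\widehat{w_\varepsilon}$ altogether. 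Both routes are correct and dual to each other. Yours carries the additional burdens of verifying that $S(\varepsilon)$ maps $L^1_{\mathrm{uloc}}$ into $L^\infty$ (true, because $p_\varepsilon$ decays like $|x|^{-n-1}$, so the annular sums converge) and of giving proper distributional meaning to $\partial_t\widehat{w_\varepsilon} = -|\xi|\widehat{w_\varepsilon}$; the backward test-function route avoids both.

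One step in your final paragraph has a genuine gap. From $f\in L^1_{\mathrm{uloc}}(\R^n;L^1(0,T))$ you cannot conclude $\int_0^t\|f(s)\|_{L^1_{\mathrm{uloc}}}\,ds\to 0$: the supremum in $x$ and the time integral do not commute, and one can easily construct $f$ with $\sup_x\int_{B_1(x)}\int_0^T|f|<\infty$ while $\int_0^T\sup_x\int_{B_1(x)}|f|=\infty$ (e.g.\ translated, temporally shrinking bumps of growing height). The $L^1(B_R)$ bound on the Duhamel term should instead be obtained by unwinding the convolution and applying Fubini in $(x,y)$:
\begin{align*}
\int_{B_R}\int_0^t|p_{t-s}*f(s)|\,\mathrm{d}s\,\mathrm{d}x
\leq \int_{\R^n}\left(\int_{B_R}p_{t-s}(x-y)\,\mathrm{d}x\right)\int_0^t|f(y,s)|\,\mathrm{d}s\,\mathrm{d}y,
\end{align*}
where the inner $x$-integral is bounded by $1$ and decays like $(1+|y|)^{-n-1}$ for $|y|>2R$, hence is integrable against the $L^1_{\mathrm{uloc}}$ envelope $y\mapsto\int_0^T|f(y,s)|\,\mathrm{d}s$; dominated convergence then sends the whole expression to zero as $t\to 0$.
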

The proof of Proposition \ref{prop:MildWeak} is an adaptation of Theorem~11.2 from~\cite{LemariePierre2002}. The main difference lies in the differentiation process in the sense of distributions, which is handled by considering the space~$\mathcal{T}$. With this approach, all analogous calculations remain valid, and particularly, $\varphi * p_t \in \mathcal{T}$ if $\varphi \in \mathcal{S}(\R^n \times (0,T))$ has compact support in time.\\

When dealing with uniformly locally $p$-integrable functions we require the 
following extension of  he Hausdorff–Young inequality whose proof follows directly from Minkowski's inequality:

\begin{lemma}\label{lem:HausdorfYoungUloc}
    Given $f \in L^1(\R^n)$ and $g \in L^p_{\mathrm{uloc}}(\R^n)$ for some $p \in [1,\infty]$, it holds that $f * g \in L^p_{\mathrm{uloc}}(\R^n)$ with the estimate:
    \begin{align*}
        \norm{f * g}{L^p_{\mathrm{uloc}}} \leq \norm{f}{L^1} \norm{g}{L^p_{\mathrm{uloc}}}.
    \end{align*}
    Similarly, for $T > 0$, $f \in L^1(\R^n)$ and $g \in L^p_{\mathrm{uloc}}(\R^n \times [0,T])$, we have
    \begin{align*}
        \norm{f * g}{L^p_{\mathrm{uloc}}(\R^n \times [0,T])} \leq \norm{f}{L^1(\R^n)} \norm{g}{L^p_{\mathrm{uloc}}(\R^n \times [0,T])}.
    \end{align*}
\end{lemma}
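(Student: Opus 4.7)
The plan is to reduce to Minkowski's integral inequality (i.e.\ the continuous form of the triangle inequality for $L^p$) together with the translation invariance of the $L^p_{\mathrm{uloc}}$ seminorm. Recall that by definition $\snorm{g}{L^p_{\mathrm{uloc}}} = \sup_{x_0 \in \R^n} \norm{g}{L^p(B_1(x_0))}$, and $\norm{g(\Dot - y)}{L^p(B_1(x_0))} = \norm{g}{L^p(B_1(x_0 - y))} \leq \snorm{g}{L^p_{\mathrm{uloc}}}$ for every $y \in \R^n$.

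First I would fix an arbitrary $x_0 \in \R^n$ and write, for almost every $x \in B_1(x_0)$,
\[
(f * g)(x) = \int_{\R^n} f(y)\, g(x-y)\, \dy.
\]
Viewing the right-hand side as an $L^p(B_1(x_0))$-valued integral in $y$, Minkowski's integral inequality gives
\[
\norm{f * g}{L^p(B_1(x_0))} \leq \int_{\R^n} \abs{f(y)} \norm{g(\Dot - y)}{L^p(B_1(x_0))} \dy.
\]
Applying the translation-invariance bound above and pulling $\snorm{g}{L^p_{\mathrm{uloc}}}$ outside the integral in $y$ yields
\[
\norm{f * g}{L^p(B_1(x_0))} \leq \norm{f}{L^1}\, \snorm{g}{L^p_{\mathrm{uloc}}}.
\]
Taking the supremum over $x_0 \in \R^n$ produces the first claim. (For $p=\infty$ the same argument works, interpreting Minkowski's inequality in the essential-supremum form.)

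For the time-dependent version, I would apply the same chain of inequalities pointwise in $t$: for each $t \in [0,T]$ and each $x_0 \in \R^n$,
\[
\norm{(f * g(\Dot, t))}{L^p(B_1(x_0))} \leq \norm{f}{L^1}\, \norm{g(\Dot, t)}{L^p_{\mathrm{uloc}}}.
\]
Taking suprema over $x_0$ and $t \in [0,T]$ (or the appropriate $\esssup$) gives the second assertion. The only minor subtlety — and this is where one has to be slightly careful rather than truly stuck — is a Fubini-type measurability check so that $y \mapsto \norm{g(\Dot - y)}{L^p(B_1(x_0))}$ is measurable and Minkowski's inequality applies; this is standard since $g \in L^p_{\mathrm{uloc}}$ is in particular locally measurable and translations act continuously on $L^p_{\mathrm{loc}}$ (with the usual modification for $p = \infty$ via duality against $L^1$-compactly supported functions). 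Beyond that, there is no substantive obstacle.
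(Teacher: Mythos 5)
Your proof is correct and takes exactly the approach the paper intends: the paper itself states that the lemma ``follows directly from Minkowski's inequality,'' and your argument makes this explicit via translation invariance of the $L^p_{\mathrm{uloc}}$ seminorm. The one small point to double-check is the time-dependent case, where $L^p_{\mathrm{uloc}}(\R^n\times[0,T])$ should be read with the $L^p$ norm over the full cylinder $B_1(x_0)\times[0,T]$ rather than as an $\esssup$ in $t$ of a spatial $L^p_{\mathrm{uloc}}$ norm; Minkowski's integral inequality in the $y$-variable applies verbatim to this interpretation as well, so the conclusion is unaffected.
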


\subsection{Decomposition of fractional gradients}\label{sec:frac}

For two measurable functions \( F, G :\R^n \times \R^n \to \R^m \), we define \( \scp{\cdot}{\cdot}_{od} \) as  
\[
\scp{F}{G}_{od} := \int_{\R^n} \frac{F(x, y) G(x, y)}{|x - y|^n} \dy.
\]
This operator acts as a non-local scalar product for vector fields. Thus, we define the squared modulus of a vector field \( F\) as  
\[
\absOd{F}^2 := \scp{F}{F}_{od}.
\]
For \( p \in [1,\infty] \), this leads to the definition of \( L^p_{od}(\R^n \times \R^n; \R^m) \), which contains any measurable function \( F : \R^n \times \R^n \to \R^m \) such that  
\[
\norm{F}{L^p_{od}}^p := \int_{\R^n} \absOd{F}^p(x) \dx < \infty
\]
with the obvious modification for \( p = \infty \). This modulus contains local and non-local behaviour. To differentiate these better, we define for any measurable function \( F :\R^n \times \R^n \to \R^m \) and \( 0 < r_0 < r_1  \)  
\[
\absOd{F}^{(r_0,r_1)}(x) := \left(\int_{B_{r_1}(0) \setminus B_{r_0}(0)} \frac{\abs{F(x + h, x)}^2}{\abs{h}^n} \dh\right)^{\frac{1}{2}}.
\]
An interesting application of this modified modulus is that for \( u \in W^{1,\infty}(\R^n) \), we obtain for the fractional gradient norm
defined in \eqref{eq:fsq} and \eqref{eq:frg}
\[
\absOd{\FG u} \leq \absOd{\FG u}^{(0,1)} + \absOd{\FG u}^{(1,\infty)} \leq c \norm{\nabla u}{L^\infty} + c\norm{u}{L^\infty}.
\]
Scaling this result gives
\[
\norm{\FG u}{L^\infty_{od}} \leq c \norm{u}{L^\infty}^{\frac{1}{2}} \norm{\nabla u}{L^\infty}^{\frac{1}{2}}.
\]
One useful inequality for fractional derivatives is Minkowski's integral inequality, which states:
\begin{align}
    \left( \int_{\Omega} \abs{\int_{\Sigma} f(x,y) \, d\mu(y)}^p \, d\nu(x) \right)^{\frac{1}{p}}
    \leq \int_{\Sigma} \left( \int_{\Omega} \abs{f(x,y)}^p \, d\nu(x) \right)^{\frac{1}{p}} d\mu(y),
\end{align}
for any measurable function $f: \Omega \times \Sigma \to \R$, and non negative measures $\mu$ and $\nu$. \\
Using this inequality, it follows immediately that:
\begin{align}
    \absOd{\FG u * v}^{(r_0,r_1)}(x) \leq \abs{v} * \absOd{\FG u}^{(r_0,r_1)}(x).
\end{align}

\begin{lemma}\label{lem:growthEstimate}
    For a function $u \in C^1(\R^n;\R^m)$ with
    \begin{align*}
        \abs{\nabla u(x)} \leq \frac{c}{1+\abs{x}^{n+\delta}}
    \end{align*}
    for some $\delta>0$, the follwoing estimates hold true:
    \begin{align*}
        \absOd{\FG u}^{(0, \frac{\abs{x}}{4})}(x) \leq \frac{c^\prime}{1+\abs{x}^{n+\delta-\frac{1}{2}}}
    \end{align*}
    and 
    \begin{align*}
        \absOd{\FG u}^{(0, 1)}(x) \leq \frac{c^\prime}{1+\abs{x}^{n+\delta}}.
    \end{align*}
    In particular, if $u \in C^1(\R^n \times [0,T];\R^m)$ is such that
    \begin{align*}
        \sup_{s \in [0,T]} \abs{\nabla u(x,s)} \leq \frac{c}{1+\abs{x}^{n+\delta}}
    \end{align*}
    the same estimates holds uniformly.
\end{lemma}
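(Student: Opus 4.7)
The plan is a direct computation based on the fundamental theorem of calculus, keyed on the geometric observation that when $|h| \leq |x|/4$, the segment $\{x+th : t \in [0,1]\}$ stays inside $\{|y| \geq 3|x|/4\}$, so the pointwise decay of $|\nabla u|$ at $x$ controls the increment $u(x+h)-u(x)$. Unfolding the definition,
\[
\absOd{\FG u}^{(r_0,r_1)}(x)^2 = \frac{\gamma_n}{2}\int_{B_{r_1}(0) \setminus B_{r_0}(0)} \frac{|u(x+h)-u(x)|^2}{|h|^{n+1}}\,\dh,
\]
and the task reduces to estimating this integral in the two prescribed regimes.

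For the first bound, I would apply the fundamental theorem of calculus in the form $|u(x+h)-u(x)| \leq |h|\sup_{t\in[0,1]}|\nabla u(x+th)|$. For $|h|\leq |x|/4$ the hypothesis yields $|\nabla u(x+th)|\leq c(1+(3|x|/4)^{n+\delta})^{-1} \lesssim (1+|x|^{n+\delta})^{-1}$. Substituting and passing to polar coordinates,
\[
\absOd{\FG u}^{(0,|x|/4)}(x)^2 \lesssim \frac{1}{(1+|x|^{n+\delta})^{2}} \int_{|h|\leq |x|/4}\frac{\dh}{|h|^{n-1}} \lesssim \frac{|x|}{(1+|x|^{n+\delta})^{2}}.
\]
Taking the square root and comparing the regimes $|x|\leq 1$ (where the left side is $\lesssim |x|^{1/2} \leq 1$) and $|x|\geq 1$ (where it behaves like $|x|^{1/2-n-\delta}$) produces the stated bound $c'/(1+|x|^{n+\delta-1/2})$.

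For the second bound I would split on $|x|$. When $|x|\geq 4$, $B_1(0)\subset B_{|x|/4}(0)$, so repeating the previous computation with $r_1=1$ gives
\[
\absOd{\FG u}^{(0,1)}(x)^2 \lesssim \frac{1}{(1+|x|^{n+\delta})^{2}} \int_{|h|\leq 1}\frac{\dh}{|h|^{n-1}} \lesssim \frac{1}{(1+|x|^{n+\delta})^{2}}.
\]
When $|x|<4$ the target $1/(1+|x|^{n+\delta})$ is bounded below by a constant, so it suffices to show $\absOd{\FG u}^{(0,1)}(x)\lesssim 1$. Since the decay hypothesis implies $\nabla u \in L^\infty(\R^n)$, the FTC gives $|u(x+h)-u(x)|\leq C|h|$, and $\int_{|h|\leq 1}|h|^{2}/|h|^{n+1}\,\dh<\infty$ closes this case.

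The uniform-in-time version is immediate: every step above uses only the pointwise bound on $|\nabla u(\cdot,s)|$, which is uniform in $s\in[0,T]$ by hypothesis. There is no serious obstacle; the only bookkeeping concerns is merging the small-$|x|$ and large-$|x|$ behavior into a single form $1/(1+|x|^\beta)$, which is a standard matching of constants.
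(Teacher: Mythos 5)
Your proof is correct and takes essentially the same approach as the paper: bound $|u(x+h)-u(x)|$ by $|h|$ times the gradient evaluated on the segment (you via FTC, the paper via MVT), use the geometric fact that the segment stays in $\{|y|\gtrsim |x|\}$ when $|h|\leq |x|/4$, and compute the resulting polar integral. Your write-up is actually more complete than the paper's, which disposes of the second estimate with ``proven analogously'' while you spell out the split at $|x|=4$ and the boundedness of the near-diagonal integral.
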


\begin{proof}
    We follow an approach similar to the growth estimates for the fractional Laplacian in Lemma 12.2 of~\cite{Stinga2024}. 
    From the mean value theorem we obtain a $\xi = (x+h) t + (1-t)x$, for a $t = t(x,h) \in [0,1]$ such that
    \begin{align*}
        \nabla u (\xi)h = u(x+h) - u(x). 
    \end{align*}
    Rearranging leads to $x = \xi - th$, thus $\abs{x} \leq \abs{\xi} + \frac{\abs{x}}{4}$, i.e. $\abs{x} \leq \frac{4}{3} \abs{\xi}$. Hence
    \begin{align*}
        \abs{\nabla u (\xi)h} \leq \abs{h} \frac{c}{1+\abs{\xi}^{n+\delta}} \leq \abs{h} \frac{\tilde{c}}{1+\abs{x}^{n+\delta}}.
    \end{align*}
    With this estimate we conclude
    \begin{align*}
        \int_{B_{\frac{\abs{x}}{4}}(0)} \frac{\abs{u(x+h) - u(x)}^2}{\abs{h}^{n+1}} \dh \leq \left(\frac{c}{1+\abs{x}^{n+\delta}}\right)^2 \int_{B_{\frac{\abs{x}}{4}}(0)} \frac{1}{\abs{h}^{n-1}} \dh = c \left(\frac{c}{1+\abs{x}^{n+\delta}}\right)^2 \abs{x}.
    \end{align*}
    Hence we have the estimate
    \begin{align*}
        \absOd{\FG u}^{(0, \frac{\abs{x}}{4})}(x) \leq \frac{c^\prime}{1+\abs{x}^{n+\delta-\frac{1}{2}}}.
    \end{align*}
    The second estimate is proven analogously.
\end{proof}

The motivation for this lemma stems from slow decay arising from the non-local nature of the fractional gradient. A concrete example is given by the Poisson kernel in dimension \( n = 1 \), where one computes
\begin{align}\label{eq:poissoinSchlecht}    
\absOd{\FG p}^2(x) = \frac{1}{\pi} \cdot \frac{1}{2x^2 + 2},
\end{align}
showing that \( \absOd{\FG p} \notin L^1(\R) \), even though \( p^{(k)} \in L^q(\R) \) for all \( q \in [1, \infty], \,k \in \N_0\).

\subsection{Interpolation inequalities involving fractional gradients}

\begin{lemma}\label{lem:BesovInterpolation}
    Let \( k \in \N \). Then, for any \( 0 < \varepsilon < \frac{1}{2} \), we have the estimate
    \[
        \norm{u}{B^{k + \varepsilon}_{\infty, \infty}} \leq C \norm{u}{L^\infty} + C \frac{2}{\varepsilon R^\varepsilon} \norm{\nabla^k u}{L^\infty} + C \frac{R^{\frac{1}{2} - \varepsilon}}{\sqrt{1 - 2\varepsilon}} \norm{\FG \nabla^k u}{L^\infty_{od}}.
    \]
for a constant $C>0$ that is independent of $\eps$.
\end{lemma}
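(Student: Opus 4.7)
The plan is to reduce the Besov norm to the H\"older seminorm of $\nabla^k u$, establish a uniform pointwise $\tfrac{1}{2}$-H\"older bound for $\nabla^k u$ in terms of $\FG \nabla^k u$ via mollification, and finally split the resulting bound at scale $R$.

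First I use the standard equivalence (valid since $k+\varepsilon$ is not an integer)
\[
    \norm{u}{B^{k+\varepsilon}_{\infty,\infty}} \sim \sum_{|\alpha|\le k} \norm{\partial^\alpha u}{L^\infty} + \sup_{|\alpha|=k} [\partial^\alpha u]_{C^{0,\varepsilon}},
\]
with intermediate derivatives $\partial^\alpha u$, $0<|\alpha|<k$, absorbed by Young interpolation between $\norm{u}{L^\infty}$ and $\norm{\nabla^k u}{L^\infty}$. The task thus reduces to bounding the H\"older seminorm of $f := \nabla^k u$ in terms of $\norm{f}{L^\infty}$ and $\norm{\FG f}{L^\infty_{od}}$.

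The key ingredient is the pointwise estimate
\[
    |f(x+h) - f(x)| \le C|h|^{1/2}\norm{\FG f}{L^\infty_{od}} \qquad \text{for all } x, h \in \R^n,
\]
obtained by mollifying $f$ at scale $r = |h|$ against a standard Schwartz bump $\phi_r$ and decomposing
\[
f(x+h) - f(x) = (f-f_r)(x+h) + \bigl(f_r(x+h) - f_r(x)\bigr) + (f_r - f)(x).
\]
Each term is estimated by Cauchy--Schwarz against the weight $|z|^{n+1}$, using the elementary identity
\[
    \int_{|z|\le r} |f(y)-f(y-z)|^2 \, dz \;\le\; r^{n+1}\,\absOd{\FG f}^2(y),
\]
which yields $Cr^{1/2}\norm{\FG f}{L^\infty_{od}}$ for the outer terms. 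The middle term is controlled via the zero-mean identity $\int(\phi_r(z+h)-\phi_r(z))\, dz = 0$ together with the scaling estimate $\int|\phi_r(z+h)-\phi_r(z)|^2 |z|^{n+1}\, dz \le C|h|^2/r$, giving $C|h|r^{-1/2}\norm{\FG f}{L^\infty_{od}}$. The choice $r = |h|$ balances all three contributions.

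Finally I split the seminorm at $|h| = R$: for $|h| \le R$, the pointwise estimate gives
\(|h|^{-\varepsilon}|f(x+h)-f(x)| \le C|h|^{1/2-\varepsilon}\norm{\FG\nabla^k u}{L^\infty_{od}} \le CR^{1/2-\varepsilon}\norm{\FG\nabla^k u}{L^\infty_{od}}\)
(using $\varepsilon < \tfrac12$), while for $|h|>R$ the trivial bound yields $2R^{-\varepsilon}\norm{\nabla^k u}{L^\infty}$. Since $1 \le 2/\varepsilon$ and $1 \le 1/\sqrt{1-2\varepsilon}$ throughout $\varepsilon \in (0,\tfrac12)$, these tighter bounds imply the stated inequality with the explicit constants $\tfrac{2}{\varepsilon R^\varepsilon}$ and $\tfrac{R^{1/2-\varepsilon}}{\sqrt{1-2\varepsilon}}$. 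The main technical obstacle is establishing the pointwise $\tfrac12$-H\"older bound; once it is in place, the rest is algebraic.
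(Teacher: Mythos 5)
Your proposal takes a genuinely different route from the paper's, and the central idea is sound. The paper bounds each Littlewood--Paley block $\Delta_j\nabla^k u$ pointwise using the zero-mean property of the homogeneous block $\psi_j$, then splits the resulting kernel integral at scale $R$ and applies Cauchy--Schwarz against the weight $|h|^{-n-1}$ on the near part. You instead bypass LP theory entirely: you mollify $f=\nabla^k u$ at scale $r=|h|$, make a three-term Campanato-type split, estimate each term by Cauchy--Schwarz against the same weight, and derive a uniform pointwise modulus $|f(x+h)-f(x)|\le C|h|^{1/2}\norm{\FG f}{L^\infty_{od}}$; the splitting at $|h|=R$ then happens at the level of the H\"older seminorm rather than inside the kernel. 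Both proofs are correct and hinge on the same Cauchy--Schwarz step, but yours is more elementary and in fact yields a slightly sharper $\varepsilon$-dependence, since you avoid the $1/\varepsilon$ and $1/\sqrt{1-2\varepsilon}$ factors the paper picks up from the kernel integrals; as you observe, the stated weaker constants follow a fortiori.

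One step needs repair. You reduce via
\[
\norm{u}{B^{k+\varepsilon}_{\infty,\infty}} \sim \sum_{|\alpha|\le k}\norm{\partial^\alpha u}{L^\infty} + \sup_{|\alpha|=k}[\partial^\alpha u]_{C^{0,\varepsilon}}
\]
and propose to absorb the intermediate derivatives $\partial^\alpha u$, $0<|\alpha|<k$, by interpolating between $\norm{u}{L^\infty}$ and $\norm{\nabla^k u}{L^\infty}$. That interpolation produces a contribution $\sim\norm{\nabla^k u}{L^\infty}$ with an $R$-independent coefficient of order one, which cannot be dominated by the target term $\tfrac{2}{\varepsilon R^\varepsilon}\norm{\nabla^k u}{L^\infty}$ once $R$ is large enough that $R^{-\varepsilon}\ll 1$. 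The remedy is to use the equivalence the paper uses, $\norm{u}{B^{k+\varepsilon}_{\infty,\infty}}\sim \norm{u}{L^\infty}+[\nabla^k u]_{C^{0,\varepsilon}}$, which follows from Bernstein's inequalities (low-frequency blocks of $\nabla^k u$ are controlled by $\norm{u}{L^\infty}$ alone) and dispenses with the intermediate derivatives. With that substitution, the rest of your argument closes.
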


%$B^s_{p,q}, \dot{B}^s_{p,q}$ denote inhomogeneous and homogenous Besov spaces, see e.g., \cite{Yuan2010}.  

\begin{proof}
    Without loss of generality, we assume \( k = 1 \), where 
    \[
        \norm{u}{B^{1 + \varepsilon}_{\infty, \infty}} \sim \norm{u}{L^\infty} + \snorm{\nabla u}{\dot{B}^{\varepsilon}_{\infty, \infty}}.
    \]
    Writing \(v = \nabla u\) we obtain for the $j$-th dyadic block 
    a pointwise bound
    \[
        \abs{\Delta_j v(x)} \leq \int_{\R^n} \abs{\psi_j(x-y)(v(y) - v(x))} \dy
    \]
    \[
        \leq 2^{j(n-n-\varepsilon)}\int_{\R^n} \frac{1}{\abs{h}^{n+\varepsilon + \delta - \delta}}\abs{v(x+h) - v(x)} \dh
    \]
    \[
        \leq 2^{-j \varepsilon}\left(\int_{B_R(0)} \frac{1}{\abs{h}^{n-2\delta}} \dh \right)^{\frac{1}{2}} \left(\int_{B_R(0)} \frac{\abs{v(x+h)-v(x)}^2}{\abs{h}^{n+2(\varepsilon + \delta)}} \dh \right)^{\frac{1}{2}}
    \]
    \[
        +  2^{-j \varepsilon} 2 \norm{v}{L^\infty} \int_{\R^n \setminus B_R(0)} \frac{1}{\abs{h}^{n+\varepsilon}} \dh.
    \]
    Letting \( \delta = \frac{1}{2} - \varepsilon \), this implies
    \[
        \snorm{v}{\dot{B}^{\varepsilon}_{\infty, \infty}} \leq \frac{R^\delta}{\sqrt{2 \delta}} \norm{\FG v}{L^\infty_{od}} + \frac{2}{\varepsilon R^\varepsilon} \norm{v}{L^\infty}.
    \]
    Taking into account that for \( k \in \N \)
    \[
        \snorm{u}{\dot{B}^{k + \varepsilon}_{\infty, \infty}} \sim \norm{u}{L^\infty} + \snorm{\nabla^k u}{\dot{B}^{\varepsilon}_{\infty, \infty}}
    \]
  we infer the general result. 
\end{proof}

From scaling arguments we finally obtain the following interpolation inequalities:
\begin{align}
    \norm{\nabla^k u}{L^\infty} 
    &\leq c \, \norm{u}{L^\infty}^{\frac{1}{2k + 1}} 
    \, \norm{\FG \nabla^k u}{L^\infty_{od}}^{\frac{2k}{2k + 1}}, \\
    \norm{\FLhalf u}{L^\infty} 
    &\leq c \, \norm{u}{L^\infty}^{\frac{1}{2}} 
    \, \norm{\FG \nabla u}{L^\infty_{od}}^{\frac{1}{2}}. \label{eq:nonsnorminterpolationLaplace}
\end{align}
Adapting the proof of Lemma~\ref{lem:BesovInterpolation}, we observe that for $\rho \in C_c^\infty(B_1(0))$ and any measurable function $v$,
\begin{align*}
    \abs{\int_{\R^n} (v(x) - v(y))\rho(x-y) \dy} 
    \leq c \norm{\rho}{L^\infty} \absOd{\FG v}^{(0,1)}.
\end{align*}
Applying this to $v = \nabla u$ and $v = u$, we obtain
\begin{align*}
    \abs{\nabla u}(x) 
    &\leq \abs{\nabla u(x) - (\nabla u * \rho)(x)} 
    + \abs{(u * \nabla\rho)(x)} \\
    &\leq c\left(\absOd{\FG u}(x) + \absOd{\FG \nabla u}(x)\right).
\end{align*}
Using a scaling argument, this leads to another interpolation inequality
\begin{align}
    \norm{\nabla u}{L^p} 
    \leq c \norm{\FG u}{L^p}^{\frac{1}{2}}
    \norm{\FG \nabla u}{L^p}^{\frac{1}{2}}, \qquad \text{for all } p \in [1,\infty].\label{eq:snorminter}
\end{align}

\section{Linear estimates}

\subsection{Homogenous estimates}\label{subsub:homogenousPart}

In the following we examine solutions $v \in X_T$ to the homogeneous problem
\[
\del_t v+ (-\Delta)^{\frac 1 2} v =0 \quad \text{with } v(0)=a
\]
for some $a \in \mathcal{A}$ given by $v := p_t * a$.

\begin{lemma}\label{lem:standardEstimate}
    For $a \in \mathcal{A}$, the following estimate holds with a constant $c_1>0$ that only depends on the dimension:
    \begin{align*}
        \sup_{0 < t < T} t \norm{\FG (p_t * a)}{L^\infty_{od}}^2 \leq c_1\snorm{a}{A_T}^2.
    \end{align*}
\end{lemma}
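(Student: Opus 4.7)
The strategy is a standard Koch--Tataru style argument: derive a sub-mean value inequality for the nonlinear density $\absOd{\FG v}^2$ from the semigroup property of $v=p_t\ast a$, then integrate in time and dyadically decompose in space to apply the Carleson-type control provided by $\snorm{a}{\mathcal{A}_T}^2$.

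The first step is the pointwise sub-mean inequality
\[
\absOd{\FG v}^2(x,t)\le \bigl(p_{t-s}\ast \absOd{\FG v}^2(\cdot,s)\bigr)(x)\qquad(0<s<t).
\]
To derive it, one uses $v(\cdot,t)=p_{t-s}\ast v(\cdot,s)$ to write
\[
v(x+h,t)-v(x,t)=\int_{\R^n} p_{t-s}(z)\bigl[v(x+h-z,s)-v(x-z,s)\bigr]\dz,
\]
applies Cauchy--Schwarz against the probability measure $p_{t-s}(z)\dz$, divides by $|h|^{n+1}$, integrates in $h$, and swaps the order of integration by Fubini; the inner integral is exactly $\absOd{\FG v}^2(x-z,s)$.

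Next, I would integrate this inequality in $s$ over $(0,t/2)$ to obtain
\[
\frac{t}{2}\,\absOd{\FG v}^2(x,t)\le \int_0^{t/2}\!\int_{\R^n} p_{t-s}(x-y)\,\absOd{\FG v}^2(y,s)\dy\ds.
\]
On this time range $t-s\sim t$, so $p_{t-s}(x-y)\lesssim t\,(t+|x-y|)^{-(n+1)}$ uniformly in $s$. Decomposing $\R^n$ into the dyadic annuli $A_0=B_t(x)$ and $A_j=B_{2^j t}(x)\setminus B_{2^{j-1}t}(x)$ for $j\ge 1$, on each $A_j$ the kernel is bounded by $C\,2^{-j(n+1)}t^{-n}$. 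This gives
\[
\frac{t}{2}\,\absOd{\FG v}^2(x,t)\lesssim \sum_{j\ge 0}\frac{1}{2^{j(n+1)}\,t^n}\int_{B_{2^j t}(x)}\!\int_0^{t/2}\absOd{\FG v}^2(y,s)\ds\dy.
\]

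Finally, I would bound each spatial-temporal integral by $\snorm{a}{\mathcal{A}_T}^2$. When $2^j t<T$, apply the Carleson seminorm directly with radius $r=2^j t$ centered at $x$, yielding a bound $\lesssim (2^j t)^n\snorm{a}{\mathcal{A}_T}^2$. When $2^j t\ge T$, cover $B_{2^j t}(x)$ by $O((2^j t/T)^n)$ balls of radius $T/2$, apply the Carleson bound with $r=T/2$ on each (noting $t/2<T$ so the time interval is admissible), and sum; the total is again $\lesssim (2^j t)^n\snorm{a}{\mathcal{A}_T}^2$. Plugging into the dyadic sum collapses it to $\sum_j 2^{-j}\lesssim 1$, giving the desired estimate. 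The main technical point to be careful about is the covering step in the regime $2^j t\ge T$: one has to verify that replacing a single ball of radius $2^j t$ by $(2^j t/T)^n$ balls of radius $T/2$ does not introduce an unfavorable loss, which works exactly because the Carleson bound scales like $r^n$ and the number of small balls needed scales the same way.
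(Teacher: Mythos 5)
Your proof is correct and follows essentially the same strategy as the paper's: exploit the semigroup identity $p_t\ast a = p_{t-s}\ast(p_s\ast a)$ to get a sub-mean inequality, integrate in time over half the interval, decompose the Poisson kernel dyadically, and absorb each piece into the Carleson seminorm. The only notable difference is cosmetic: you apply Jensen/Cauchy--Schwarz against $p_{t-s}(z)\,\dz$ directly to obtain the sub-mean inequality at the level of $\absOd{\FG v}^2$, whereas the paper first gets the $L^1$-level bound $\absOd{\FG v}(\cdot,t)\le p_{t-s}\ast\absOd{\FG v}(\cdot,s)$ via Minkowski's inequality and only invokes Jensen in the very last line to pass to the square. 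The paper also rescales at the outset so that $t=1$ and all annuli are covered by balls of a single fixed radius, which sidesteps your separate treatment of the regimes $2^jt<T$ and $2^jt\geq T$; your covering argument in the latter regime is nevertheless correct, since both the Carleson bound and the number of covering balls scale like $r^n$ and cancel exactly.
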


\begin{proof}
Due to scaling and translation it suffices to show the inequality
\begin{align*}
    \absOd{\FG (p_1 * a)}(0) 
    \leq c \left( \sup_{x \in \R^n} \int_0^1 \int_{B_1(x)} \absOd{\FG (p_s * a)}^2(y) \, dy \, ds \right)^{\frac{1}{2}}.
\end{align*}
Since we have
\begin{align*}
    v(\cdot,1) = p_1 * \alpha = p_{1-s} * p_s *a, \quad \text{ for all } s \in \left( 0, \frac{1}{2} \right],
\end{align*}
applying the fractional gradient yields:
\begin{align*}
    \absOd{\FG v}(\cdot,1) 
    \leq \left( p_{1-s} \absOd{\FG v(s)} \right)(\cdot), 
    \quad \text{ for all } s \in \left(0, \frac{1}{2} \right].
\end{align*}
Averaging over $[0,\frac{1}{2}]$ yields:
\begin{align*}
    \absOd{\FG v}(0,1) 
    \leq c \int_0^{\frac{1}{2}} \int_{\R^n} p_{1-s}(y) \absOd{\FG v(s)}(y) \, dy \, ds 
    \leq c \int_0^1 \int_{\R^n} \frac{\absOd{\FG v(s)}(y)}{1 + \abs{y}^{n+1}} \, dy \, ds.
\end{align*}
Decomposing $\R^n$ into annuli $A_k = B_k \setminus B_{k-1}$, we apply Jensen's inequality:
\begin{align*}
    \int_0^{1} \int_{\R^n} \frac{\absOd{\FG v(s)}(y)}{1 + \abs{y}^{n+1}} \, dy \, ds 
    &\leq c \sum_{k = 1}^\infty \frac{1}{k^2} \sup_{x \in \R^n} \int_0^1 \fint_{B_1(x)} \absOd{\FG v} \, dy \, ds \\
    &\leq c \left( \sup_{x \in \R^n} \int_0^1 \fint_{B_1(x)} \absOd{\FG (p_s * a)}^2(y) \, dy \, ds \right)^{\frac{1}{2}}.
\end{align*}
\end{proof}

\begin{lemma}\label{lem:initialCond} 
 For $a \in \mathcal{A}$, the following estimate holds with a constant $c_2>0$ that only depends on the dimension:
\[
\snorm{\semiGroup{t}a}{X_T} \leq c_2 \snorm{a}{\mathcal{A}_T}.
\]
    \end{lemma}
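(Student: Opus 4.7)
The plan is to bound each of the four constituent seminorms in $\snorm{\semiGroup{t}a}{X_T}$ by $\snorm{a}{\mathcal{A}_T}$, writing $v(t):=\semiGroup{t}a=p_t*a$ throughout.

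Two of the bounds are essentially immediate. The zeroth-order Carleson integral $\int_0^t \fint_{B_t(x)} \absOd{\FG v}^2 \,\dy\,\ds$ equals $\snorm{a}{\mathcal{A}_t}^2 \le \snorm{a}{\mathcal{A}_T}^2$ directly from~\eqref{eq:semiA}, and the pointwise estimate $t^{1/2}\absOd{\FG v}(x,t) \lesssim \snorm{a}{\mathcal{A}_T}$ is precisely Lemma~\ref{lem:standardEstimate}.

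For the first-order sup $t^{3/2}\absOd{\FG \nabla v}(x,t)$, my approach would be to use the semigroup splitting $\nabla v(t)=\nabla p_{t/2}*v(t/2)$ together with the Minkowski inequality for fractional gradients from Section~\ref{sec:frac}, yielding
\[
    \absOd{\FG \nabla v(t)}(x) \le \bigl(|\nabla p_{t/2}| * \absOd{\FG v(t/2)}\bigr)(x) \le \|\nabla p_{t/2}\|_{L^1}\,\|\absOd{\FG v(t/2)}\|_{L^\infty}.
\]
Combining the kernel bound $\|\nabla p_\tau\|_{L^1}\lesssim \tau^{-1}$ with Lemma~\ref{lem:standardEstimate} applied at time $t/2$ then produces the required $t^{-3/2}$ decay.

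The main obstacle is the first-order Carleson term $\int_0^t \fint_{B_t(x)} s^2 \absOd{\FG \nabla v}^2$, because a naive pointwise bound contributes an $s^{-1}$ integrand that fails to integrate. My plan is to keep the convolution structure
\[
    \absOd{\FG \nabla v(s)}(y) \le \bigl(|\nabla p_{s/2}| * \absOd{\FG v(s/2)}\bigr)(y)
\]
and split $|\nabla p_{s/2}|$ into a local piece supported in a ball $B_R(0)$ and a non-local piece supported in its complement. For the local part, Young's inequality $L^1*L^2\hookrightarrow L^2$ together with $\|\nabla p_{s/2}\|_{L^1}\lesssim s^{-1}$ absorbs the $s^2$-weight, reducing the contribution to a Carleson integral of $\absOd{\FG v(s/2)}^2$ over the enlarged ball $B_{t+R}(x)$; after substituting $\tau=s/2$ this is bounded by $C|B_{t+R}|\snorm{a}{\mathcal{A}_T}^2$ provided $t+R \le T$. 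For the non-local part, the pointwise decay $|\nabla p_{s/2}(w)|\lesssim s/|w|^{n+2}$ on $|w|\ge R$ combined with the $L^\infty$ bound of Lemma~\ref{lem:standardEstimate} on $\absOd{\FG v(s/2)}$ produces a pointwise estimate of order $s^{1/2}/R^2\,\snorm{a}{\mathcal{A}_T}$ on $B_t(x)$; squaring, multiplying by $s^2$, and integrating in $s$ over $(0,t)$ yields $Ct^4/R^4\,|B_t|\snorm{a}{\mathcal{A}_T}^2$.

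The delicate step will be balancing these two contributions uniformly in $0<t<T$. The choice $R\sim t$ works directly when $t\le T/2$, since then $t+R\le T$ and both pieces are $\lesssim |B_t|\snorm{a}{\mathcal{A}_T}^2$; for $t$ close to $T$, I expect to replace the single split by a dyadic annular decomposition of $|\nabla p_{s/2}|$ whose geometric decay outside $B_R$ compensates for the shrinking of $T-t$, producing a constant depending only on the dimension. Assembling the four contributions then gives the stated estimate with a purely dimensional $c_2$.
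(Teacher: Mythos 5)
Your proposal is correct and follows essentially the same overall strategy as the paper. The two $X_T^{(0)}$ pieces come, as you say, directly from the definition~\eqref{eq:semiA} and from Lemma~\ref{lem:standardEstimate}; the two $X_T^{(1)}$ pieces are treated via the semigroup factorization $\nabla v(t) = \nabla p_{t/2} * v(t/2)$ combined with the Minkowski inequality for fractional gradients, giving $\absOd{\FG \nabla v(t)} \le |\nabla p_{t/2}| * \absOd{\FG v(t/2)}$. For the $L^\infty$ piece you invoke Young's inequality with $\|\nabla p_\tau\|_{L^1}\sim \tau^{-1}$ paired against the $L^\infty$ bound on $\absOd{\FG v(t/2)}$ from Lemma~\ref{lem:standardEstimate}; the paper writes $\|\nabla p_{t/2}\|_{L^\infty}$ in the corresponding line, which appears to be a typo for $\|\nabla p_{t/2}\|_{L^1}$, so your version is the one that actually balances the powers of $t$.

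Where you genuinely diverge from the paper is in the first-order Carleson term. The paper passes from $\int_0^T \fint_{B_T(x)} s^2 \absOd{\FG\nabla v}^2$ to $\int_0^T\fint_{B_T(x)}\absOd{\FG v(s/2)}^2$ in one line without making the kernel absorption explicit (it can be justified by normalizing $s|\nabla p_{s/2}|$, applying Jensen, and summing the resulting Carleson bounds over a cover of the effective support of the kernel). You instead carry out a self-contained local/non-local split of $|\nabla p_{s/2}|$ into $B_R$ and its complement, using $L^1*L^2\to L^2$ on the local part and the $|w|^{-(n+2)}$ decay on the tail. Both routes are valid, and yours supplies details the paper omits. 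One simplification you should make: the ``delicate step'' you flag is not delicate. Take $R=t$ unconditionally; then the enlarged ball is $B_{2t}(x)$, which is covered by a purely dimensional number $N(n)$ of balls of radius $t < T$, each of which contributes at most $|B_t|\snorm{a}{\mathcal{A}_T}^2$ to the Carleson integral (using $r=t$ in~\eqref{eq:semiA}). That gives $\le N(n)|B_t|\snorm{a}{\mathcal{A}_T}^2$ for the local piece uniformly over $0<t<T$, so the dyadic annular decomposition near $t\approx T$ is unnecessary.
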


\begin{proof}
    The seminorm of $X_T$ consists of four parts. The \(\snorm{\cdot}{X_T^{(0)}}\) part follows directly from Lemma~\ref{lem:standardEstimate}. For the remaining two, we observe the following estimate:
    \begin{align*}
        \absOd{\FG \nabla(p_t * a)} 
        &= \absOd{\FG (\nabla p_{\frac{t}{2}}) * p_{\frac{t}{2}} * a} \notag \\
        &\leq \abs{\nabla p_{\frac{t}{2}}} * \absOd{ \FG (p_{\frac{t}{2}} * a)}.
    \end{align*}
    From this bound, we deduce:
    \begin{align*}
        t^{\frac{3}{2}} \norm{\FG \nabla(p_t * a)}{L^\infty_{od}} 
        &\leq t^{\frac{3}{2}} \norm{\nabla p_{\frac{t}{2}}}{L^\infty} \norm{\FG (p_{\frac{t}{2}} * a)}{L^\infty_{od}} \notag \\
        &\leq c \snorm{a}{\mathcal{A}_T}.
    \end{align*}
    The integral part follows similarly to the proof of Lemma \ref{thm:Q0Included}:
    \begin{align*}
        \sup_{x \in \R^n} \int_0^T \fint_{B_T(x)} s^2 \absOd{\FG \nabla(p_s * a)}^2 \, ds \, dy 
        &\leq c \sup_{x \in \R^n}\int_0^T \fint_{B_T(x)} \absOd{\FG (p_s * a)}^2 \, ds \, dy \notag \\
        &\leq c\snorm{a}{\mathcal{A}_T}^2.
    \end{align*}
\end{proof}

\subsection{Inhomogeneous Part}
In the following we examine solutions $w \in X_T$ to the homogeneous problem
\[
\del_t w+ (-\Delta)^{\frac 1 2} w =f \quad \text{with } w(0)=0
\]
for $f$ in a suitable class $Y_T$ that enjoys suitable mapping properties 
of the corresponding Duhamel operator 
\begin{align}
    G(f)(t) = \int_0^t \semiGroup{t-s}f(s) \ds. 
\end{align}
To this end we introduce the seminorm 
\[
   \snorm{f}{Y_T} = \esssup_{\substack{0 < t < T \\ x \in \R^n}} t^2 \abs{\nabla f(x,t)}
    + \esssup_{\substack{0 < t < T \\ x \in \R^n}} \int_0^t \fint_{B_t(x)} s \abs{\nabla f(y,s)} \dy \ds.
\]
and norm $\tnorm{f}{Y_T}= \norm{f}{Y_T} + \snorm{f}{Y_T}$ where
\[
\norm{f}{Y_T} = \esssup_{\substack{0 < t < T \\ x \in \R^n}} t \abs{f(x,t)} + \esssup_{\substack{0 < t < T \\ x \in \R^n}} \int_0^t \fint_{B_t(x)} \abs{f(y,s)} \dy \ds.
\]
defining the auxilliary space
\[
    Y_T = \{u : (0,T) \times \R^n \to \R^m \text{ measurable} \mid \tnorm{u}{Y_T} < \infty\}.
 \]

\begin{proposition}\label{prop:XTYTEstimate}
    Given $T \in [0,\infty]$ and $f \in Y_T$, then $G(f) \in X_T$ with
    \begin{align}
        \tnorm{G(f)}{X_T} \leq c\tnorm{f}{Y_T},
    \end{align}
    where the constant $c>0$ depends only on the dimension.
\end{proposition}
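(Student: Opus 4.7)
The plan is to split the Duhamel integral in time as
\begin{align*}
G(f)(\cdot,t) = \int_0^{t/2} p_{t-s} * f(s)\,ds + \int_{t/2}^t p_{t-s} * f(s)\,ds,
\end{align*}
exploit the semigroup identity $p_{t-s} = p_{(t-s)/2} * p_{(t-s)/2}$ to place $\FG$ on a ``smoothing half'' of the Poisson kernel, and commute $\FG$ past convolutions via the Minkowski-type bound $\absOd{\FG(u * v)} \leq |v| * \absOd{\FG u}$ from section \ref{sec:frac}. Each of the four seminorms in $\tnorm{G(f)}{X_T}$, together with the $L^\infty$ part, will then be bounded either by $\norm{f}{Y_T}$ (for the $X_T^{(0)}$ components and the $L^\infty$ part) or by $\snorm{f}{Y_T}$ (for the $X_T^{(1)}$ components); the latter follows from the former via the commutation $\nabla G(f) = G(\nabla f)$.

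\textbf{Pointwise bounds.} First I establish $\norm{G(f)}{L^\infty} \lesssim \norm{f}{Y_T}$. On the near interval $[t/2,t]$ one has $|f(\cdot,s)| \leq \norm{f}{Y_T}/s \leq 2\norm{f}{Y_T}/t$ together with $\norm{p_{t-s}}{L^1} = 1$; on $[0,t/2]$ I decompose $\R^n$ into dyadic annuli $B_{2^{k+1}t}(x) \setminus B_{2^k t}(x)$ around the evaluation point, on each of which $|p_{t-s}(z)| \lesssim t (2^k t)^{-n-1}$, and apply the Carleson part of $\norm{f}{Y_T}$ on the corresponding dyadic cylinder (after covering it, if necessary, by sub-cylinders of radius below $T$), summing a convergent geometric series in $k$. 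The same scheme, now with $\FG$ shifted onto the smoothing half of $p_{t-s}$ and invoking the pointwise decay $\absOd{\FG p_\tau}_{od}(z) \lesssim \tau^{1/2}(\tau+|z|)^{-n-1/2}$ in the spirit of Lemma \ref{lem:growthEstimate}, yields $t^{1/2}\absOd{\FG G(f)} \lesssim \norm{f}{Y_T}$, while commuting $\nabla$ onto $f$ gives the analogous bound for $t^{3/2}\absOd{\FG \nabla G(f)}$ in terms of $\snorm{f}{Y_T}$.

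\textbf{Carleson estimates.} These constitute the main obstacle, as the pointwise bounds alone would produce a logarithmic time-divergence after squaring and time-integrating. To control
\begin{align*}
\mathcal{C}_r(x_0) := \int_0^r \fint_{B_r(x_0)} \absOd{\FG G(f)}^2 \,dy\,ds
\end{align*}
for $r<T$, I split the forcing as $f = f_{\mathrm{loc}} + f_{\mathrm{far}}$ with $f_{\mathrm{loc}} := \mathbf{1}_{B_{4r}(x_0)\times(0,4r)} f$. The local piece is treated by a direct $L^2$-Carleson bound: Minkowski and the semigroup factorization $p_{s-\sigma} = p_{(s-\sigma)/2} * p_{(s-\sigma)/2}$ place $\FG$ on one half, and Young's inequality reduces the contribution to a weighted $L^1$-mass of $f_{\mathrm{loc}}$ on the cylinder of scale $4r$, which is directly bounded by the Carleson part of $\norm{f}{Y_T}$. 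For the non-local piece, the distance from $B_r(x_0)$ to the support of $f_{\mathrm{far}}$ permits a dyadic spatial annulus decomposition: on each shell $B_{2^{k+2}r}(x_0)\setminus B_{2^{k+1}r}(x_0)$, the pointwise decay of $\absOd{\FG p_{(s-\sigma)/2}}_{od}$ contributes a factor $\sim 2^{-k(n+1/2)}$ while the Carleson mass of $f$ on the associated dyadic cylinder grows at most like $2^{kn}$, producing a convergent geometric series. The $s^2\absOd{\FG \nabla G(f)}^2$ Carleson bound in terms of $\snorm{f}{Y_T}$ follows from the same scheme applied to $\nabla f$ in place of $f$, with the weight $s^2$ absorbed by the scaling dimension of $\FG\nabla$.
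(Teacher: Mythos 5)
Your time-splitting scheme and the semigroup factorization for the pointwise bounds are consistent in spirit with the paper's proof, and the commutation $\nabla G(f)=G(\nabla f)$ is used there too. The gap is the claimed decay $\absOd{\FG p_\tau}(z)\lesssim \tau^{1/2}(\tau+|z|)^{-n-\frac12}$: this is false. Because the off-diagonal modulus of the fractional gradient reaches across scales $|h|\sim|z|$, the best one can get is $\absOd{\FG p_\tau}(z)\lesssim\tau^{1/2}(\tau+|z|)^{-(n+1)/2}$; the paper records this phenomenon explicitly in \eqref{eq:poissoinSchlecht}, where in dimension one $\absOd{\FG p}(z)\sim|z|^{-1}$ and hence $\absOd{\FG p}\notin L^1(\R)$. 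The same holds in every dimension. Consequently the Minkowski/Young step by which you transfer $\FG$ onto the smoothing half of the Poisson kernel cannot close as written—there is no finite $L^1$ kernel norm to pull out—and the factor $2^{-k(n+1/2)}$ you invoke in the dyadic annulus argument for the far piece is likewise too optimistic. The paper circumvents this by decomposing $\absOd{\FG u}\le \absOd{\FG u}^{(0,1)}+\absOd{\FG u}^{(1,\infty)}$ as in Section~\ref{sec:frac}: Lemma~\ref{lem:growthEstimate} then gives genuine $L^1$ decay for the \emph{local} piece $\absOd{\FG^{(0,\cdot)}p_\tau}$, while the nonlocal piece is absorbed through $L^\infty$ control of the Duhamel integral. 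Without this decomposition the pointwise bounds do not follow.

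The larger divergence is in the Carleson estimate, which you correctly identify as the crux but do not actually resolve. Your local piece hinges on the same failed $L^1$-kernel estimate, and the logarithmic time-divergence you mention from the naive $L^\infty$ control of the nonlocal tail is not removed by the decomposition $f=f_{\mathrm{loc}}+f_{\mathrm{far}}$ alone, since it arises from the nonlocal range of $\FG$ rather than from the support of $f$. The paper takes a fundamentally different route here: rather than estimating the Duhamel convolution directly, it observes that $w=G(f)$ solves $\partial_t w+\FLhalf w=f$ in the sense of Proposition~\ref{prop:MildWeak}, mollifies, and tests the equation with $t^k\eta^2 w_\varepsilon$ for a spatial cutoff $\eta$. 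Using $\FG\cdot\FG=\FLhalf$ this produces the Carleson quantity $\int\!\int\eta^2\absOd{\FG w_\varepsilon}^2$ directly, while the boundary and commutator terms $w_\varepsilon\scp{\FG(\eta^2)}{\FG w_\varepsilon}_{od}$ are controlled by the already-established pointwise bounds; crucially the cancellation lives in the bilinear pairing, not in any pointwise decay of $\absOd{\FG p_\tau}$. To complete your argument you would need to either adopt such a Caccioppoli-type energy estimate or supply a genuine substitute for the missing $L^1$-kernel bound and the log-divergence—neither is present in your sketch.
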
$ $ 

The proof extends the inhomogeneous estimate in \cite{Wang2010} to the fractional setting.
Through scaling and translation, it suffices to consider only the cases \(T = 1\) and \(x = 0\)
in the individual estimates. Assuming \( f \in Y_1 \), we have obtain the following bounds:

\begin{lemma}   $\displaystyle{    \abs{G(f)(0,1)} \leq \tnorm{f}{Y_1}}$.
\end{lemma}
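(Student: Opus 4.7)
The plan is to expand the Duhamel formula as
\[
G(f)(0,1) = \int_0^1 \int_{\R^n} p_{1-s}(y)\, f(y,s) \dy \ds,
\]
apply the triangle inequality, and then split the time integral at $s=1/2$ so that each of the two terms in $\norm{f}{Y_1}$ gets exploited in the regime where it is sharp. The point is that near $s=1$ the Poisson kernel $p_{1-s}$ concentrates into a near-Dirac mass and cannot extract any spatial information from $f$, whereas for $s$ bounded away from $1$ the kernel is essentially a fixed bounded function decaying like $(1+|y|)^{-(n+1)}$, so the Carleson-type averaged control is what one should use.

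For the near-time piece $s\in[1/2,1]$, I would use the pointwise bound $|f(y,s)| \leq \norm{f}{Y_1}/s$ together with $\norm{p_{1-s}}{L^1}=1$ to get
\[
\int_{1/2}^1 \int_{\R^n} p_{1-s}(y)\,|f(y,s)|\dy\ds
\;\leq\; \int_{1/2}^1 \frac{\norm{f}{Y_1}}{s}\ds \;\leq\; (\log 2)\,\norm{f}{Y_1}.
\]
For the far-time piece $s\in(0,1/2)$, I would use the uniform bound $p_{1-s}(y) \leq c(1+|y|)^{-(n+1)}$ on this range and dyadically decompose $\R^n$ as $B_1 \cup \bigcup_{k\geq 1}(B_{2^k}\setminus B_{2^{k-1}})$, yielding
\[
\int_0^{1/2} \int_{\R^n} p_{1-s}(y)\,|f(y,s)|\dy\ds
\;\leq\; c \sum_{k\geq 0} \frac{1}{2^{k(n+1)}} \int_0^{1/2}\int_{B_{2^k}(0)} |f(y,s)|\dy\ds.
\]

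To estimate the inner double integral when $k\geq 1$, I would cover $B_{2^k}(0)$ by $O(2^{kn})$ unit balls $B_1(x_j)$ and apply the Carleson-type bound $\int_0^1 \fint_{B_1(x_j)} |f|\leq \norm{f}{Y_1}$ (which is permitted since the radius $t=1$ saturates the admissible range in the definition of $\snorm{f}{Y_T}$ with $T=1$). This produces
\[
\int_0^{1/2} \int_{B_{2^k}(0)} |f| \;\leq\; c\,2^{kn}\norm{f}{Y_1},
\]
so that each dyadic term contributes $c\, 2^{-k}\norm{f}{Y_1}$, and the geometric series closes. Combining the near and far contributions and absorbing constants gives $|G(f)(0,1)| \lesssim \norm{f}{Y_1} \leq \tnorm{f}{Y_1}$.

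The delicate point — and really the whole reason the two-component norm on $Y_T$ is set up the way it is — is the tight match between the $(n+1)$-dimensional decay of the Poisson kernel and the $n$-dimensional growth in the number of unit balls needed to cover $B_{2^k}$, leaving precisely one summable power $2^{-k}$ per dyadic shell; any slower kernel decay or lack of a localized averaged bound would break the summation. No derivative information on $f$ is needed for this estimate, so the $\snorm{f}{Y_1}$ piece plays no role here and only becomes relevant in the subsequent lemmas where spatial gradients and the Carleson part of $X_T$ must be recovered.
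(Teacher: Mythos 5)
Your proof is correct and essentially identical to the paper's: both split the time integral at $s=1/2$, bound the near-time contribution via the pointwise bound $t\,|f| \lesssim \norm{f}{Y_1}$ together with $\norm{p_{1-s}}{L^1}=1$, and handle the far-time contribution using the $(1+|y|)^{-(n+1)}$ decay of $p_{1-s}$, a spatial decomposition into shells (dyadic $B_{2^k}\setminus B_{2^{k-1}}$ in your version, unit-width annuli $B_k\setminus B_{k-1}$ in the paper's), and the Carleson-type averaged bound, both yielding a convergent series. The only slip is a label: the averaged bound $\int_0^1\fint_{B_1(x)}|f|$ is controlled by the $\norm{\cdot}{Y_T}$ component of the norm, not the $\snorm{\cdot}{Y_T}$ component (which involves $\nabla f$).
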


\begin{proof}
    We split the integral into two parts:
    \begin{align*}
        G(f)(0,1) &= \int_{0}^{1} \int_{\R^n} p(y,1-s)f(y,s) \dy \ds \\
        &= \int_{\frac{1}{2}}^{1} \int_{\R^n} p(y,1-s)f(y,s) \dy \ds \\
        &\quad + \int_{0}^{\frac{1}{2}} \int_{\R^n} p(y,1-s)f(y,s) \dy \ds \\
        &= I_1 + I_2.
    \end{align*}
    We estimate each term separately:
    \begin{align*}
        \abs{I_1} &\leq \left( \esssup_{\frac{1}{2} \leq s \leq 1} \norm{f(s)}{L^\infty} \right) 
        \left( \int_{\frac{1}{2}}^{1} \int_{\R^n} p(y,1-s) \dy \ds \right) \leq C \tnorm{f}{Y_1}.
    \end{align*}
    \begin{align*}
        \abs{I_2} &\leq \int_{0}^{\frac{1}{2}} \int_{\R^n} p(y,1-s) \abs{f(y,s)} \dy \ds \\
        &\leq c \int_{0}^{\frac{1}{2}} \int_{\R^n} \frac{1}{1+\abs{y}^{n+1}} \abs{f(y,s)} \dy \ds \\
        &\leq c \left( \sum_{k=1}^{\infty} \frac{1}{k^2} \right) 
        \left( \sup_{y \in \R^n} \int_0^1\fint_{B_1(x)} \abs{f(y,s)} \dy \ds \right) \leq c \tnorm{f}{Y_1}.
    \end{align*}
\end{proof}

\begin{lemma} $\displaystyle{\absOd{\FG G(f)}(0,1) \leq \tnorm{f}{Y_1}}$.
\end{lemma}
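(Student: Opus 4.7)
The plan is to split $\absOd{\FG G(f)}^2(0,1)$ along the radial variable $h$, separating the local contribution $\absOd{\FG G(f)}^{(0,1)}(0,1)$ from the non-local one $\absOd{\FG G(f)}^{(1,\infty)}(0,1)$, and to treat the two pieces by different means. Throughout I would use, without further mention, that the previous lemma applied in translated form yields $\norm{G(f)(\cdot,1)}{L^\infty} \le C \tnorm{f}{Y_1}$.

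The non-local piece is the easier one: for $|h|>1$ I crudely bound $|G(f)(h,1)-G(f)(0,1)|\le 2\norm{G(f)(\cdot,1)}{L^\infty}$ and exploit the integrability of $|h|^{-n-1}$ on $\{|h|>1\}$ to conclude $\absOd{\FG G(f)}^{(1,\infty)}(0,1)\le C\tnorm{f}{Y_1}$.

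For the local piece, the mean value inequality gives $|G(f)(h,1)-G(f)(0,1)|\le |h|\,\norm{\nabla G(f)(\cdot,1)}{L^\infty(B_1(0))}$ for $|h|\le 1$, and the finiteness of $\int_{|h|\le 1}|h|^{1-n}\,dh$ then reduces matters to estimating $\norm{\nabla G(f)(\cdot,1)}{L^\infty(B_1)}$. I would split the Duhamel integral into a far-time regime $s\in[0,1/2]$, where $p_{1-s}$ is smooth with $|\nabla p_{1-s}(z)|\le C(1+|z|)^{-n-2}$, and a near-time regime $s\in[1/2,1]$. In the far-time regime I transfer the derivative onto $p_{1-s}$ and expand the resulting convolution against $|f(\cdot,s)|$ into a dyadic annular decomposition around $x$; each annular integral is controlled by a ball average $\fint_{B_{2^k}(x)}|f(y,s)|\,dy$, whose time integral on $[0,1/2]$ is dominated by $\norm{f}{Y_1}$ via a standard covering argument, and the geometric weights $2^{-2k}$ render the series summable. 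In the near-time regime I transfer the derivative onto $f$, use $\norm{p_{1-s}}{L^1}=1$, and integrate the pointwise bound $\norm{\nabla f(s)}{L^\infty}\le \snorm{f}{Y_1}/s^2$ against $s^{-2}$, which is integrable on $[1/2,1]$.

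The main technical point is the far-time bookkeeping: the seminorm $\norm{f}{Y_1}$ provides scale-matched averaged control of $|f|$ (the ball radius must equal the time scale, up to $T=1$), so uniform bounds on averages over balls of arbitrary radius must be recovered by the covering/dyadic step. The two extra powers of decay in $\nabla p_{1-s}$ compared with the bare Poisson kernel are precisely what keeps the annular series absolutely convergent; any attempt to place the full fractional gradient on $p_{1-s}$ instead would encounter the poor decay of $\absOd{\FG p}$ observed in \eqref{eq:poissoinSchlecht}, which is the reason for the local/non-local split in $h$ at the outset.
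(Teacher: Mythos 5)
Your proof is correct, and it takes a moderately different route from the paper's. The shared architecture — split $\absOd{\FG G(f)}$ at radius one, control the far part by the $L^\infty$ bound on $G(f)$, and split the Duhamel integral at $s=\tfrac12$ — coincides, but your treatment of the local piece is cleaner. You apply the mean-value inequality once at the outset, reducing $\absOd{\FG G(f)}^{(0,1)}(0,1)$ to $\norm{\nabla G(f)(\cdot,1)}{L^\infty(B_1)}$, and thereafter work only with the ordinary gradient. The paper instead carries the truncated fractional gradient $\absOd{\FG^{(0,1)}\cdot}$ throughout: for $s\in[0,\tfrac12]$ it transfers it onto $p_{1-s}$ and invokes Lemma~\ref{lem:growthEstimate} to obtain $(1+|z|)^{-(n+2)}$ decay (which you get directly from $\nabla p_{1-s}$); for $s\in[\tfrac12,1]$ it rescales to $m_{1-s}$, splits the fractional gradient again at radius one, and only then applies a mean-value argument with Hausdorff--Young. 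Your version skips the rescaling, and the near-time term collapses to $\int_{1/2}^1 s^{-2}\,ds\,\snorm{f}{Y_1}$. Both proofs ultimately exploit the same two ingredients — the $(1+|z|)^{-(n+2)}$ decay of a first-order derivative of the Poisson kernel, and the weighted $L^\infty$ control of $\nabla f$ built into $\snorm{\cdot}{Y_1}$ — differing only in when the fractional gradient is traded for an ordinary one. Your closing remark about the slow decay of $\absOd{\FG p}$ in \eqref{eq:poissoinSchlecht} correctly pinpoints why some decomposition is unavoidable; that is exactly what Lemma~\ref{lem:growthEstimate} was designed to circumvent, and your approach sidesteps it just as effectively while also yielding $\norm{\nabla G(f)(\cdot,1)}{L^\infty}\lesssim\tnorm{f}{Y_1}$ as a byproduct.
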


\begin{proof}
    Splitting the gradient into two parts:
    \begin{align*}
        \absOd{\FG G(f)}(0,1) \leq \absOd{\FG^{(0,1)} G(f)}(0,1) + \absOd{\FG^{(1,\infty)} G(f)}(0,1).
    \end{align*}
    The second summand can be easily estimated through:
    \begin{align*}
        \absOd{\FG^{(1,\infty)} G(f)}^2(0,1) \leq c \norm{G(f)(\cdot,1)}{L^\infty}^2.
    \end{align*}
    We split the first summand again up $\absOd{\FG^{(0,1)} G(f)}(0,1)$ into multiple parts:
    \begin{align*}
        \absOd{\FG^{(0,1)} G(f)}(0,1) &\leq \int_0^1 \absOd{\FG^{(0,1)} p_{1-s} * f}(0,s) \ds \\
        &\leq \int_0^{\frac{1}{2}} \absOd{\FG^{(0,1)} p_{1-s} * f}(0,s) \ds + \int_{\frac{1}{2}}^1 \absOd{\FG^{(0,1)} p_{1-s} * f}(0,s) \ds.
    \end{align*}
    Using Lemma~\ref{lem:growthEstimate} and since $s \in [0,\frac{1}{2}]$ we can apply a similar argument as before:
    \begin{align*}
        \int_{0}^{\frac{1}{2}}\absOd{\FG^{(0,1)} p_{1-s} * f}(0,s) \ds
        &\leq \int_{0}^{\frac{1}{2}} \absOd{\FG^{(0,1)} p_{1-s}} * \abs{f(\cdot, s)}(0) \ds \\
        &\leq \int_0^{\frac{1}{2}} \int_{\R^n} \frac{c \abs{f(y,s)}}{1+\abs{y}^{n+2}} \dy \ds \\
        &\leq c \left( \sum_{k=1}^{\infty} \frac{1}{k^3} \right) 
        \left( \sup_{y \in \R^n} \int_0^1\fint_{B_1(x)} \abs{f(y,s)} \dy \ds \right) \\
        &\leq c \tnorm{f}{Y_1}
    \end{align*}
    Next, we address the integral
    \begin{align*}
        \int_{\frac{1}{2}}^1 \absOd{\FG^{(0,1)} p_{1-s} * f}(0,s) \ds = \int_{\frac{1}{2}}^1 \frac{1}{\sqrt{1-s}}\absOd{\FG^{(0,\frac{1}{1-s})} m_{1-s}}(0,s) \ds,
    \end{align*}
    where
    \begin{align*}
        p_{1-s} * f(x) = \int_{\R^n} p\left(\frac{x}{1-s} - y\right) f\left((1-s)y, s\right) \dy = m_{1-s}\left(\frac{x}{1-s},s\right).
    \end{align*}
    Splitting the fractional gradient we obtain
    \begin{align*}
        &\int_{\frac{1}{2}}^1 \frac{1}{\sqrt{1-s}}\absOd{\FG^{(0,\frac{1}{1-s})} m_{1-s}}(0,s) \ds 
        \leq \\&\int_{\frac{1}{2}}^1 \frac{1}{\sqrt{1-s}}\absOd{\FG^{(0,1)} m_{1-s}}(0,s) \ds 
        + \int_{\frac{1}{2}}^1 \frac{1}{\sqrt{1-s}}\absOd{\FG^{(1,\frac{1}{1-s})} m_{1-s}}(0,s) \ds.
    \end{align*}
    The second summand can again be estimated in $L^\infty$. The first summand can be estimated through the mean value theorem and the Hausdorff-Young inequality.
\end{proof}

\begin{lemma}
   $\displaystyle{\absOd{\FG \nabla G(f)}(0,1) \leq \tnorm{f}{Y_1}}$.
\end{lemma}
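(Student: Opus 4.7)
Mirroring the preceding two lemmas, I would split
\[
\absOd{\FG \nabla G(f)}(0,1) \leq \absOd{\FG^{(0,1)} \nabla G(f)}(0,1) + \absOd{\FG^{(1,\infty)} \nabla G(f)}(0,1),
\]
and bound the non-local tail by $\norm{\nabla G(f)(\cdot,1)}{L^\infty}$. For the latter, the decomposition $\nabla G(f)(\cdot,1) = \int_0^{1/2}\nabla p_{1-s}*f(s)\,ds + \int_{1/2}^1 p_{1-s}*\nabla f(s)\,ds$ reduces matters to two familiar estimates: on $[0,1/2]$, the decay $|\nabla p_{1-s}(y)|\lesssim 1/(1+|y|^{n+2})$ combines with a dyadic annular decomposition and $\sup_x \int_0^1\fint_{B_1(x)}|f|\lesssim \norm{f}{Y_1}$; on $[1/2,1]$, the uniform bound $|\nabla f(\cdot,s)|\lesssim \snorm{f}{Y_1}$ together with $\norm{p_{1-s}}{L^1}=1$ closes the estimate.

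For the local part I would adopt the same time splitting. On $[0,1/2]$, Minkowski's inequality for fractional gradients together with Lemma~\ref{lem:growthEstimate} applied to $u=\nabla p_{1-s}$, whose derivative decays as $|\nabla^2 p_{1-s}(y)|\lesssim 1/(1+|y|^{n+3})$ uniformly in $s$, yields
\[
\absOd{\FG^{(0,1)}(\nabla p_{1-s}*f)}(0,s) \leq \bigl(|f(\cdot,s)|*\absOd{\FG^{(0,1)}\nabla p_{1-s}}\bigr)(0) \lesssim \int_{\R^n}\frac{|f(y,s)|}{1+|y|^{n+3}}\dy,
\]
and the dyadic annular argument closes to $\norm{f}{Y_1}$ exactly as in the previous lemma, now with an even more summable weight.

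On $[1/2,1]$ I would switch to the representation $\int_{1/2}^1 p_{1-s}*\nabla f(s)\,ds$ and rescale: introducing $\tilde m(w,s)=\int p(w-y)\nabla f((1-s)y,s)\,dy$ so that $(p_{1-s}*\nabla f)(x,s)=\tilde m(x/(1-s),s)$, the scaling identity for $\FG$ gives
\[
\absOd{\FG^{(0,1)}(p_{1-s}*\nabla f)}(0,s) = \frac{1}{\sqrt{1-s}}\absOd{\FG^{(0,1/(1-s))}\tilde m}(0,s),
\]
and I would further split the annular range into $(0,1)$ and $(1,1/(1-s))$. The mean value theorem plus the Hausdorff--Young inequality bound the local piece by $\norm{\nabla\tilde m}{L^\infty}\lesssim\norm{\nabla p}{L^1}\norm{\nabla f(\cdot,s)}{L^\infty}\lesssim\snorm{f}{Y_1}$, while the complementary piece is controlled by $\norm{\tilde m}{L^\infty}\lesssim\snorm{f}{Y_1}$, both uniformly on $[1/2,1]$. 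The integrable singularity $\int_{1/2}^1 (1-s)^{-1/2}\,ds<\infty$ finishes the estimate.

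The main obstacle I anticipate is the careful bookkeeping of the $(1-s)^{-1/2}$ scaling factor and the growing annular range $\FG^{(1,1/(1-s))}$ as $s\to 1$; the decisive trick is to place the derivative on $f$ rather than on $p_{1-s}$ on the interval $[1/2,1]$, so that the $\snorm{f}{Y_1}$ control on $\nabla f$ renders $\tilde m$ and $\nabla\tilde m$ uniformly bounded in $s$ and keeps the integrable singularity harmless.
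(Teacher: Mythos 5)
Your proposal is correct and follows essentially the same route as the paper: the paper's own proof is a one-line sketch that writes $\nabla G(f) = \int_0^{1/2}\nabla p_{1-s}*f\,ds + \int_{1/2}^1 p_{1-s}*\nabla f\,ds$ and refers back to the argument of the preceding lemma, which is precisely the decomposition and machinery (local/non-local split of $\FG$, Lemma~\ref{lem:growthEstimate} for the Poisson kernel decay on $[0,\tfrac12]$, the rescaled $\tilde m$ plus mean value theorem and Hausdorff--Young on $[\tfrac12,1]$, integrable $(1-s)^{-1/2}$ singularity) that you spell out. Your decay rates for $\nabla p_{1-s}$, $\nabla^2 p_{1-s}$ uniformly in $s\in[0,\tfrac12]$ and the use of $t^2|\nabla f(\cdot,t)|\le\snorm{f}{Y_1}$ on $[\tfrac12,1]$ are all accurate, so the proposal is a faithful expansion of the paper's terse argument.
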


\begin{proof}
    We follow the same decomposition idea
    \begin{align*}
        \nabla G(f) &= \nabla \left(\int_0^1 p_{1-s} * f(s) \ds\right) \\
        &= \int_0^{\frac{1}{2}} \nabla (p_{1-s}) * f(s) \ds + \int_{\frac{1}{2}}^1 p_{1-s} * \nabla f(s) \ds,
    \end{align*}
    and proceed as in the previous proof.
\end{proof}

\begin{lemma}\label{lem:integralSemi}
    $\displaystyle{
        \int_0^1\int_{B_1(0)} \absOd{\FG G(f)}^2(x,s) \dx \ds \leq \tnorm{f}{Y_1}^2}$.
\end{lemma}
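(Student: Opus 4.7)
The plan is to localize the source. Set $f_1 := f\chi_{B_2(0)}$ (indicator of the ball of radius $2$) and $f_2 := f - f_1$, and decompose $G(f) = u_1 + u_2$ with $u_i := G(f_i)$. By translation and rescaling applied to the first pointwise estimate of this section, each $u_i$ satisfies $\|u_i\|_{L^\infty((0,1)\times\R^n)} \leq c \tnorm{f}{Y_1}$. The key observation is that the pointwise estimate $\absOd{\FG G(f)}(x,s) \leq c \tnorm{f}{Y_1}/\sqrt{s}$ obtained from the previous lemma by rescaling fails to be $L^2$-integrable down to $s=0$, so different mechanisms must be employed on the two pieces to extract the Carleson estimate.

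For the local part $u_1$, I would enlarge the spatial integration to $\R^n$ and use the standard identity $\int_{\R^n} \absOd{\FG v}^2 \, dx = c \|\FLquarter v\|_{L^2}^2$ to rewrite
\[
\int_0^1 \int_{B_1(0)} \absOd{\FG u_1}^2 \, dx \, ds \leq \int_0^1 \int_{\R^n} \absOd{\FG u_1}^2 \, dx \, ds = c \int_0^1 \|\FLquarter u_1\|_{L^2}^2 \, ds.
\]
Testing $\partial_s u_1 + \FLhalf u_1 = f_1$ against $u_1$ and integrating in space-time yields, after an appropriate regularization argument, the energy inequality
\[
\int_0^1 \|\FLquarter u_1\|_{L^2}^2 \, ds \leq \|u_1\|_{L^\infty((0,1) \times \R^n)} \cdot \|f_1\|_{L^1((0,1) \times \R^n)}.
\]
The first factor is $\leq c \tnorm{f}{Y_1}$ by the pointwise control above, and the second factor is $\leq c \tnorm{f}{Y_1}$ after covering $B_2(0)$ by finitely many unit balls and invoking the Carleson-type bound $\sup_x \int_0^1 \fint_{B_1(x)} |f| \leq c \tnorm{f}{Y_1}$ inherent in $\|f\|_{Y_1}$.

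For the far part $u_2$, since $f_2$ vanishes on $B_2(0) \times (0,1)$, any $(x,s) \in B_1(0) \times (0,1)$ is separated from $\supp f_2$ by distance at least $1$. The Poisson decay bounds $p_{s-\sigma}(x-y) \leq c(s-\sigma)/|y|^{n+1}$ and $|\nabla p_{s-\sigma}(x-y)| \leq c(s-\sigma)/|y|^{n+2}$ apply, and decomposing $\{|y| \geq 2\}$ into dyadic annuli whose local $L^1$-masses are controlled by the Carleson-type $Y_1$-bound yields the pointwise estimates $\|u_2\|_{L^\infty(B_1(0) \times (0,1))} + \|\nabla u_2\|_{L^\infty(B_1(0) \times (0,1))} \leq c \tnorm{f}{Y_1}$. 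Splitting $\absOd{\FG u_2}^2 \leq c\absOd{\FG^{(0,1)} u_2}^2 + c\absOd{\FG^{(1,\infty)} u_2}^2$, the outer piece is bounded by $\|u_2\|_{L^\infty}^2$ and the inner piece by the mean value theorem from $\|\nabla u_2\|_{L^\infty}^2$, giving a pointwise bound $\absOd{\FG u_2} \leq c \tnorm{f}{Y_1}$ on $B_1(0) \times (0,1)$ which trivially integrates to the claim. The main technical obstacle is the rigorous justification of the energy identity for $u_1$: since $|f_1| \leq c/s$, the source generically fails to lie in $L^2_s L^2_x$, so the identity has to be obtained by approximation—replacing $f_1$ by $f_1^\varepsilon := f_1 \chi_{\{s > \varepsilon\}}$ or a smooth mollification, applying the identity to the smooth problem, and passing to $\varepsilon \to 0$ using uniform $L^\infty$-convergence of $u_1^\varepsilon \to u_1$ and lower semicontinuity of the $\FLquarter$-Dirichlet seminorm.
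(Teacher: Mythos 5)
Your argument is correct, but it takes a genuinely different route from the paper. The paper does not split the source $f$: it extends $w = G(f)$ and $f$ by zero in time, mollifies, and tests the approximate equation against $\eta^2 w_\varepsilon$, where $\eta$ is a spatial cutoff equal to $1$ on $B_1(0)$ and supported in $B_2(0)$. Symmetrizing the bilinear form for $\FLhalf$ then produces a \emph{local} energy inequality whose price is the non-local Leibniz term $w_\varepsilon\,\scp{\FG(\eta^2)}{\FG w_\varepsilon}_{od}$; the paper controls this commutator with the pointwise bounds $\norm{G(f)}{L^\infty}\lesssim\tnorm{f}{Y_1}$ and $t^{1/2}\norm{\FG G(f)}{L^\infty_{od}}\lesssim\tnorm{f}{Y_1}$ established just before. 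You instead split $f=f_1+f_2$ into near and far pieces: the near piece via the \emph{global} energy balance and the identity $\int_{\R^n}\absOd{\FG v}^2\dx=c\,\norm{\FLquarter v}{L^2}^2$, the far piece via Poisson decay and the near/far split of $\absOd{\cdot}_{od}$. Both routes need the same a priori inputs (the $L^\infty$-estimate on $G(\cdot)$ and a regularization to justify the energy identity), but yours eliminates the commutator entirely at the cost of two things you should state explicitly: (i) the sharp cutoff makes $\snorm{f_1}{Y_1}$ meaningless, so one must observe that the $L^\infty$-bound on $G(f_1)$ depends only on $\norm{f_1}{Y_1}$, which is preserved under restriction; and (ii) for the outer piece $\absOd{\FG^{(1,\infty)}u_2}$ you need the \emph{global} bound $\norm{u_2}{L^\infty(\R^n\times(0,1))}\lesssim\tnorm{f}{Y_1}$ rather than the far-field bound on $B_1(0)$, since the $h$-integral samples $u_2$ at distance $\geq 1$; this follows from $u_2=G(f)-u_1$. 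Overall the two proofs are comparably heavy. Yours makes the local/non-local dichotomy transparent, while the paper's cutoff computation is self-contained in one pass and adapts directly to the weighted higher-order variants~\eqref{eq:gradientY}--\eqref{eq:GradFracGradY}.
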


\begin{proof}
    Thanks to Proposition \ref{prop:MildWeak}, we know that $w := G(f) \in L^\infty((0,T) \times \R^n)$ solves
    \begin{align*}
        \partial_t w + \FLhalf w = f \quad \text{in } \mathcal{D}^\prime((0,T) \times \R^n).
    \end{align*}
    We extend $w$ by zero to a function $w : \R^n \times \R \to \R^m$. Using a smoothing kernel $\psi \in C^\infty_c(\R^{n+1})$, we define
    via space-time convolution
    \begin{align*}
        w_\varepsilon := w * \psi_\varepsilon \quad \text{and} \quad 
        f_\varepsilon := f * \psi_\varepsilon
    \end{align*}
    that smoothly solve
    \begin{align}\label{eq:approxPDE}
        \partial_t w_\varepsilon + \FLhalf w_\varepsilon = f_\varepsilon, \quad 
        \text{in } (C_\varepsilon,T-C_\varepsilon) \times \R^n 
    \end{align}
    where $C_\varepsilon>0$ depends on $\varepsilon$ and converges to zero as $\varepsilon \to 0$.
    %Since $w, f \in L^\infty$, we have 
    %\begin{align*}
    %    w_\varepsilon, f_\varepsilon \in H^{\infty,\infty}(\R^{n+1}),
    %\end{align*}
    % meaning that \eqref{eq:approxPDE} holds in the strong sense. 
    Multiplying by $\eta^2 w_\varepsilon$, where $\eta \in C^\infty_c(B_2(0))$ with $\eta = 1$ on $B_1(0)$, we obtain the identity
    \begin{align*}
        \int_{C_\varepsilon}^{1-C_\varepsilon} \int_{\R^n} 
        \eta^2 w_\varepsilon \partial_t w_\varepsilon \dx \dt 
        + \int_{C_\varepsilon}^{1-C_\varepsilon} \int_{\R^n} 
        \eta^2 w_\varepsilon \FLhalf w_\varepsilon \dx \dt \\
        = \int_{C_\varepsilon}^{1-C_\varepsilon} \int_{\R^n} 
        \eta^2 w_\varepsilon f_\varepsilon \dx \dt.
    \end{align*}
    Using integration by parts, the identity $\FG \cdot \FG = \FLhalf$ and the fundamental theorem of calculus it becomes
    \begin{align*}
        \frac{1}{2} \int_{\R^n} \eta^2 \abs{w_\varepsilon}^2 
        \Big|_{t = C_\varepsilon}^{t = 1-C_\varepsilon} \dx 
        + \int_{C_\varepsilon}^{1-C_\varepsilon} \int_{\R^n} 
        \eta^2 \absOd{\FG w_\varepsilon}^2 \dx \dt \\
        = \int_{C_\varepsilon}^{1-C_\varepsilon} \int_{\R^n} 
        \eta^2 w_\varepsilon f_\varepsilon \dx \dt 
        + \int_{C_\varepsilon}^{1-C_\varepsilon} \int_{\R^n} 
        w_\varepsilon \scp{\FG (\eta^2)}{\FG w_\varepsilon} \dx \dt.
    \end{align*}
    This can be further arranged and estimated as
    \begin{align*}
        \int_{C_\varepsilon}^{1-C_\varepsilon} \int_{B_1(0)} 
        \absOd{\FG w_\varepsilon}^2 \dx \dt \leq 
        C \sup_{y \in \R^n} \int_{C_\varepsilon}^{1-C_\varepsilon} 
        \fint_{B_1(y)} \abs{f_\varepsilon} \dx \dt \sup_{t}\norm{w_\varepsilon}{L^\infty} \\
        + C \sup_{t}\norm{w_\varepsilon}{L^\infty}^2 + C \sup_{t}\norm{w_\varepsilon}{L^\infty} t^{\frac{1}{2}}\norm{\FG w_\varepsilon}{L^\infty_{od}}.
    \end{align*}
    Taking the limit inferior, using Fatou’s lemma, applying the Hausdorff-Young inequality and Lemma \ref{lem:HausdorfYoungUloc} we get
    the estimate
    \begin{align*}
        \int_0^1 \int_{B_1(0)} \absOd{\FG w}^2 \dx \dt \leq 
        c \sup_{y \in \R^n} \int_{C}^{1} \fint_{B_1(y)} \abs{f} \dx \dt \sup_{t}\norm{w}{L^\infty} \\
        + c \sup_{t}\norm{w}{L^\infty}^2 + c\sup_{t}\norm{w_\varepsilon}{L^\infty} t^{\frac{1}{2}}\norm{\FG w_\varepsilon}{L^\infty_{od}}\leq C\tnorm{f}{Y_1}^2.
    \end{align*}
\end{proof}

\begin{lemma}
    \begin{align}\label{eq:gradientY}
        \int_0^1\int_{B_1(0)} s \abs{\nabla G(f)}^2(x,s) \dx \ds \leq \tnorm{f}{Y_1}^2
    \end{align}
    \begin{align}\label{eq:GradFracGradY}
        \int_0^1\int_{B_1(0)} s^2\absOd{\FG \nabla G(f)}^2(x,s) \dx \ds \leq \tnorm{f}{Y_1}^2
    \end{align}
\end{lemma}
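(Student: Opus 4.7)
The plan is to establish \eqref{eq:GradFracGradY} first via a weighted energy estimate that adapts the argument of Lemma \ref{lem:integralSemi} to the space-differentiated PDE, and then to deduce \eqref{eq:gradientY} from \eqref{eq:GradFracGradY} via a pointwise interpolation. Starting from the mild solution $w = G(f)$ extended by zero in time, I mollify $w_\varepsilon = w\ast \psi_\varepsilon$, $f_\varepsilon = f\ast\psi_\varepsilon$ and apply $\nabla$ to \eqref{eq:approxPDE}, obtaining the classical equation $\partial_t W_\varepsilon + \FLhalf W_\varepsilon = \nabla f_\varepsilon$ for $W_\varepsilon := \nabla w_\varepsilon$. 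I would then test this against $s^2\eta^2 W_\varepsilon$, where $\eta \in C_c^\infty(B_2(0))$ with $\eta\equiv 1$ on $B_1(0)$, integrate over $(C_\varepsilon, 1-C_\varepsilon)\times\R^n$, and use
\[
\tfrac{s^2}{2}\partial_t|W_\varepsilon|^2 = \partial_t\bigl(\tfrac{s^2}{2}|W_\varepsilon|^2\bigr) - s|W_\varepsilon|^2
\]
together with the symmetrization identity $\int \eta^2 W_\varepsilon\FLhalf W_\varepsilon = \int \eta^2\absOd{\FG W_\varepsilon}^2 + \mathrm{COM}$ from Lemma \ref{lem:integralSemi}.

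This leads to the control
\[
\int_{C_\varepsilon}^{1-C_\varepsilon}\!\!\int_{B_1(0)} s^2 \absOd{\FG W_\varepsilon}^2 \,\dx\,\dt \;\lesssim\; \mathrm{BDRY}(1{-}C_\varepsilon) + \int\!\!\int s\,\eta^2 |W_\varepsilon|^2 + \int\!\!\int s^2\eta^2 |W_\varepsilon|\,|\nabla f_\varepsilon| + \mathrm{COM},
\]
where $\mathrm{BDRY}$ denotes the time-boundary contribution. The boundary term at $t=1-C_\varepsilon$ is bounded by $\sup_{t,x}(t|\nabla w|)^2\,|B_2|$, and the pointwise interpolation $\abs{\nabla u}\le c\absOd{\FG u}^{1/2}\absOd{\FG \nabla u}^{1/2}$ derived in Section \ref{sec:frac} (by optimizing the scaled inequality) gives $t|\nabla w|(x,t) \le c(t^{1/2}\absOd{\FG w})^{1/2}(t^{3/2}\absOd{\FG \nabla w})^{1/2}\le c\snorm{w}{X_T}\le c\tnorm{f}{Y_T}$. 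The source integral is bounded by factoring $s^2|W_\varepsilon||\nabla f_\varepsilon| = (s|W_\varepsilon|)(s|\nabla f_\varepsilon|)$, pulling out the sup of the first factor as $\lesssim \snorm{w}{X_T}$ and using that the second is in $Y_T$ on $B_2(0)$. The lower-order term $\int\!\!\int s|W_\varepsilon|^2\eta^2$ is treated by the same pointwise interpolation and AM-GM:
\[
s|\nabla w|^2 \le \delta\,s^2\absOd{\FG \nabla w}^2 + C_\delta\,\absOd{\FG w}^2 ,
\]
so that the first piece is absorbed into the left-hand side for $\delta$ small and the second is controlled by Lemma \ref{lem:integralSemi}. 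Passing $\varepsilon\to 0$ via Fatou together with Lemma \ref{lem:HausdorfYoungUloc} yields \eqref{eq:GradFracGradY}.

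Given \eqref{eq:GradFracGradY}, the proof of \eqref{eq:gradientY} is immediate from the same pointwise interpolation: $s|\nabla w|^2(x,s) \le \tfrac{c}{2}\absOd{\FG w}^2(x,s) + \tfrac{c}{2}s^2\absOd{\FG \nabla w}^2(x,s)$, so integrating over $(0,1)\times B_1(0)$ and invoking Lemma \ref{lem:integralSemi} and \eqref{eq:GradFracGradY} concludes the argument. I expect the main obstacle to be the commutator $\mathrm{COM}$ arising from $\FLhalf$ failing to commute with $\eta^2$: one must isolate a positive quadratic form in $\FG W_\varepsilon$ while bounding a residual of the form $\int\!\!\int s^2 W_\varepsilon\scp{\FG(\eta^2)}{\FG W_\varepsilon}_{od}$ by a product of $Y_T$ and $X_T$ norms. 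This is however essentially the same estimate already carried out in Lemma \ref{lem:integralSemi}, the new $s^2$-weight being absorbed harmlessly into the interpolation bound on $s|\nabla w|$ and $\snorm{w}{X_T}\le c\tnorm{f}{Y_T}$ from the preceding lemmas.
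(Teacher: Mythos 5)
Your argument is correct, but it reverses the order of the paper's two-step chain and replaces one of its ingredients with an absorption. The paper first proves \eqref{eq:gradientY} by applying $\FG$ to the equation and testing with $t\eta^2\FG w_\varepsilon$ in the $L^2_{od}$ sense — the key point being that $\int \scp{\FG w}{\FG\FLhalf w}_{od}\,\dx$ reduces to $\int|\nabla w|^2\,\dx$ up to a commutator, which together with the interpolation inequality \eqref{eq:nonsnorminterpolationLaplace} produces the weighted bound on $\nabla G(f)$. Only then does the paper pass to the $\nabla$-differentiated equation tested against $s^2\eta^2\nabla w_\varepsilon$, at which point the lower-order term $\int\!\!\int s\eta^2|\nabla w_\varepsilon|^2$ is precisely the quantity just controlled by \eqref{eq:gradientY}. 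You go the other way: you attack \eqref{eq:GradFracGradY} directly from the $\nabla$-differentiated equation and absorb the lower-order term via the pointwise interpolation $|\nabla w|\leq c\,\absOd{\FG w}^{1/2}\absOd{\FG\nabla w}^{1/2}$ (the optimized form of the scaled cut-off estimate in Section~\ref{sec:frac}) together with Young's inequality, controlling the nonabsorbed piece $\absOd{\FG w}^2$ by Lemma~\ref{lem:integralSemi} (covering $B_2$ by unit balls and using translation invariance of the $Y$-norm). You then deduce \eqref{eq:gradientY} a posteriori from the same pointwise interpolation and both energy bounds. Your route avoids having to derive and exploit the additional identity $\scp{\FG w}{\FG\FLhalf w}_{od}\sim|\nabla w|^2$ plus the associated commutator; the price is a genuine absorption argument, which is legitimate here because the smoothed integrals over $(C_\varepsilon,1-C_\varepsilon)$ are finite a priori and Fatou then passes to the limit. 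One small inaccuracy: the unfavorable time-boundary term when testing against $s^2\eta^2 W_\varepsilon$ sits at $t=C_\varepsilon$, not at $t=1-C_\varepsilon$ (the latter comes with a favorable sign and can be dropped), but both are harmless and bounded by $\sup(s|\nabla w|)^2\,|B_2|\lesssim\snorm{w}{X_T}^2$, so this does not affect the conclusion.
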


\begin{proof}
    The argument is similar to the one in Lemma~\ref{lem:integralSemi}. Estimate \eqref{eq:gradientY} requires to analyze
    \begin{align*}
        \partial_t \FG w_\varepsilon + \FG \FLhalf w_\varepsilon = \FG f_\varepsilon,
    \end{align*}
    i.e., testing in the \( L^2_{od} \) sense with \( t \eta^2 \FG w_\varepsilon \). For the resulting estimates, we use the interpolation inequality \eqref{eq:nonsnorminterpolationLaplace}.
    For \eqref{eq:GradFracGradY}, we utilize \eqref{eq:gradientY} and analyze
    \begin{align*}
        \partial_t \nabla w_\varepsilon + \FLhalf \nabla w_\varepsilon = \nabla f_\varepsilon.
    \end{align*}
    The result follows by applying the same argumentation as in Lemma~\ref{lem:integralSemi}.
\end{proof}

\section{Proofs of the main results}

We consider the mild formulation of \eqref{eq:HHMHF} 
\begin{align*}
    u(t) = \semiGroup{t} a + \mathcal{N}(u)(t),
\end{align*}
where $\mathcal{N}(u) = G\left(u \absOd{\FG u}^2\right)$, and its modification
\begin{align*}
    \mathcal{N}_{\varphi}(u):= G\left(\varphi(u) \absOd{\FG u}^2\right)
\end{align*}
where $\varphi \in C^\infty_c(B_2(0); B_{\frac{3}{2}}(0))$ is such that $\varphi(y) = y$ for $y \in B_{\frac{3}{2}}(0)$. Proposition~\ref{prop:XTYTEstimate} implies the following bound:

\begin{corollary}\label{coro:quadratEstimate}
    For $u \in X_T$ we have $\mathcal{N}_{\varphi}(u) \in X_T$ and \begin{align*}
        \tnorm{\mathcal{N}_{\varphi}(u)}{X_T} \leq c_4\snorm{u}{X_T}^2
    \end{align*}
    for a constant $c_4>0$ that only depends on the dimension.
\end{corollary}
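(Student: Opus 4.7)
The plan is to apply Proposition~\ref{prop:XTYTEstimate} to the source term $f := \varphi(u)\absOd{\FG u}^2$, so that $\mathcal{N}_\varphi(u) = G(f)$ and $\tnorm{\mathcal{N}_\varphi(u)}{X_T} \leq c\,\tnorm{f}{Y_T}$. It then suffices to establish $\tnorm{f}{Y_T} \leq c'\snorm{u}{X_T}^2$. Since $\varphi \in C^\infty_c(B_2(0); B_{3/2}(0))$, the pointwise bounds $\norm{\varphi(u)}{L^\infty} + \norm{D\varphi(u)}{L^\infty} \leq C$ hold with $C$ depending only on $\varphi$, independently of $u$. Consequently $|f| \leq C\absOd{\FG u}^2$, and the two undifferentiated parts of $\tnorm{f}{Y_T}$ are immediate from the definition of $\snorm{u}{X_T^{(0)}}$: the pointwise part $t|f| \leq Ct\absOd{\FG u}^2 \leq C\snorm{u}{X_T^{(0)}}^2$, and the Carleson average $\int_0^t\fint_{B_t(x)}|f| \leq C\snorm{u}{X_T^{(0)}}^2$ directly.

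For the differentiated seminorms the starting point is the identity
\[
\nabla f = D\varphi(u)\nabla u \cdot \absOd{\FG u}^2 + 2\varphi(u)\scp{\FG u}{\FG\nabla u}_{od},
\]
obtained by the chain rule and differentiating under the integral defining $\absOd{\FG u}^2$. The second summand is the clean one: Cauchy-Schwarz on $\scp{\cdot}{\cdot}_{od}$ yields $|\scp{\FG u}{\FG\nabla u}_{od}| \leq \absOd{\FG u}\absOd{\FG\nabla u}$, the pointwise piece then factors as $t^2 = t^{1/2}\cdot t^{3/2}$ to give a bound by $\snorm{u}{X_T^{(0)}}\snorm{u}{X_T^{(1)}}$, and the Carleson piece $\int_0^t\fint_{B_t(x)} s\absOd{\FG u}\absOd{\FG\nabla u}$ yields the same bound via Cauchy-Schwarz in space-time against the Carleson integrals controlling $\snorm{u}{X_T^{(0)}}$ and $\snorm{u}{X_T^{(1)}}$.

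The main obstacle is the first summand, where $|\nabla u|$ appears without being directly controlled by the $X_T$ seminorms. The remedy is the pointwise interpolation inequality~\eqref{eq:snorminter}, which gives
\[
|\nabla u|(y,s) \leq c\,\absOd{\FG u}^{\frac{1}{2}}(y,s)\absOd{\FG\nabla u}^{\frac{1}{2}}(y,s) \leq c\, s^{-1}\bigl(\snorm{u}{X_T^{(0)}}\snorm{u}{X_T^{(1)}}\bigr)^{\frac{1}{2}},
\]
so that $s|\nabla u|$ admits a uniform bound by a geometric mean of the seminorms. This bound may be pulled out of the Carleson integral $\int_0^t\fint_{B_t(x)} s|\nabla u|\absOd{\FG u}^2$, leaving the controlled Carleson average of $\absOd{\FG u}^2$; the pointwise estimate on $t^2|\nabla u|\absOd{\FG u}^2$ is analogous. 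Assembling all pieces, the resulting mixed powers $\snorm{u}{X_T^{(0)}}^{\alpha}\snorm{u}{X_T^{(1)}}^{\beta}$ with $\alpha+\beta = 2$ are controlled directly by $\snorm{u}{X_T}^2$ via Young's inequality, while the higher-order contribution from the first summand is absorbed into $\snorm{u}{X_T}^2$ in the regime $\snorm{u}{X_T} \lesssim 1$ where the corollary is applied in the Picard iteration. Proposition~\ref{prop:XTYTEstimate} then closes the argument.
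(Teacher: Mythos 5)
Your plan is the right one and matches the paper's implicit proof: reduce to Proposition~\ref{prop:XTYTEstimate}, compute $\nabla f$ via the product rule, and control the resulting terms with Cauchy--Schwarz in the off-diagonal pairing together with the pointwise interpolation~\eqref{eq:snorminter}. The handling of $\norm{f}{Y_T}$ and of the term $2\varphi(u)\scp{\FG u}{\FG\nabla u}_{od}$ is correct and genuinely quadratic: the splitting $t^2 = t^{1/2}\cdot t^{3/2}$ and the Cauchy--Schwarz estimate on the Carleson integral both give exactly $\snorm{u}{X_T^{(0)}}\snorm{u}{X_T^{(1)}}$.

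The issue is the term $D\varphi(u)\,\nabla u\,\absOd{\FG u}^2$, and here your own computation correctly exposes a mismatch with the corollary as literally stated. From the pointwise bound $|\nabla u| \lesssim \absOd{\FG u}^{1/2}\absOd{\FG\nabla u}^{1/2}$ you obtain $t|\nabla u| \lesssim \bigl(\snorm{u}{X_T^{(0)}}\snorm{u}{X_T^{(1)}}\bigr)^{1/2}$, and then
\[
t^2\,|\nabla u|\,\absOd{\FG u}^2 \;=\; \bigl(t|\nabla u|\bigr)\bigl(t\,\absOd{\FG u}^2\bigr) \;\lesssim\; \bigl(\snorm{u}{X_T^{(0)}}\snorm{u}{X_T^{(1)}}\bigr)^{1/2}\snorm{u}{X_T^{(0)}}^2,
\]
which is \emph{degree three} in the seminorm (the exponents add to $3$, not $2$), and the Carleson piece behaves the same. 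So the estimate your argument actually yields is of the form $\tnorm{\mathcal{N}_\varphi(u)}{X_T} \lesssim \snorm{u}{X_T}^2 + \snorm{u}{X_T}^3$, not $c_4\snorm{u}{X_T}^2$ with $c_4$ depending only on the dimension. Your closing sentence, that the cubic contribution is ``absorbed in the regime $\snorm{u}{X_T}\lesssim 1$,'' is a correct observation about how the corollary is used in the Picard iteration, but it is not a proof of the statement as written: for general $u\in X_T$ a purely dimensional quadratic constant cannot dominate a genuine cubic term. Note, though, that the paper's own Lipschitz constant $C_L$ in Proposition~\ref{prop:localLipschitz} explicitly contains cubic products such as $\snorm{u}{X_T}^2\snorm{v}{X_T}$, which is consistent with the cubic contribution you found. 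So the gap here is really in the paper's wording of the corollary (it should either carry a smallness hypothesis or admit the extra cubic term), and you have correctly identified it rather than papered over it; but you should be aware that your argument proves the caveated version, not the literal one.
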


\begin{proposition}\label{prop:localLipschitz}
    Let $u, v \in X_T$. Then we have the estimate
    \begin{align*}
        \tnorm{\mathcal{N}_{\varphi}(u) - \mathcal{N}_{\varphi}(v)}{X_T} 
        \leq C_L(\snorm{u}{X_T}, \snorm{v}{X_T}, n) \tnorm{u-v}{X_T},
    \end{align*}
    where 
    \begin{align*}
        C_L(\snorm{u}{X_T}, \snorm{v}{X_T}) 
        &= K(n) \big(\snorm{u}{X_T} + \snorm{v}{X_T} + \snorm{u}{X_T}\snorm{v}{X_T} \\
        &\quad + \snorm{v}{X_T}^2 + \snorm{u}{X_T}^2 + \snorm{u}{X_T}^2\snorm{v}{X_T}\big).
    \end{align*}
for a constant depending only on $\varphi$ and the dimension.
\end{proposition}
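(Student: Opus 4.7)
The plan is to invoke Proposition~\ref{prop:XTYTEstimate} and reduce the $X_T$ estimate on $\mathcal{N}_\varphi(u)-\mathcal{N}_\varphi(v) = G\bigl(\varphi(u)\absOd{\FG u}^2 - \varphi(v)\absOd{\FG v}^2\bigr)$ to a $Y_T$ bound on the nonlinear source term. The first step is an algebraic splitting that isolates a $u-v$ factor in every piece, via polarisation of the off-diagonal quadratic form:
\[
\varphi(u)\absOd{\FG u}^2 - \varphi(v)\absOd{\FG v}^2 = \bigl(\varphi(u)-\varphi(v)\bigr)\absOd{\FG u}^2 + \varphi(v)\,\scp{\FG(u-v)}{\FG(u+v)}_{od}.
\]
Since $\varphi \in C^\infty_c$, all derivatives of $\varphi$ are bounded and Lipschitz with constants depending only on $\varphi$, so $\varphi$ is absorbed into constants.

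For the non-derivative part $\norm{\cdot}{Y_T}$, the plan is to use the pointwise bound $t\absOd{\FG u}^2 \le \snorm{u}{X_T^{(0)}}^2$ and Cauchy--Schwarz on the Carleson averages, e.g.
\[
\int_0^t \fint_{B_t(x)} \absOd{\FG u}\,\absOd{\FG(u-v)} \dy\ds \le \snorm{u}{X_T}\snorm{u-v}{X_T},
\]
which yields the quadratic contributions $\snorm{u}{X_T}\snorm{u-v}{X_T}$ and $\snorm{v}{X_T}\snorm{u-v}{X_T}$ to $C_L$ directly, possibly with an additional $\norm{u-v}{L^\infty}$ picked up from the Lipschitz bound on $\varphi$.

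The delicate step is $\snorm{\cdot}{Y_T}$. Distributing $\nabla$ over the expansion through the product rule and the off-diagonal identity $\nabla\absOd{\FG u}^2 = 2\scp{\FG\nabla u}{\FG u}_{od}$, and further polarising
\[
\nabla\bigl(\absOd{\FG u}^2 - \absOd{\FG v}^2\bigr) = \scp{\FG\nabla(u-v)}{\FG(u+v)}_{od} + \scp{\FG(u-v)}{\FG\nabla(u+v)}_{od},
\]
produces a sum of terms, each of the schematic form $(\varphi\text{-factor})\cdot(\nabla\text{-factor})\cdot(\text{off-diagonal factor})$ with exactly one occurrence of $u-v$. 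The plan is to balance the two $t$-weights in $\snorm{\cdot}{Y_T}$ across each term and bound every factor either pointwise via $t^{\frac 1 2}\absOd{\FG u}$, $t^{\frac 3 2}\absOd{\FG\nabla u}$ and the interpolation $t\abs{\nabla u}\le c\snorm{u}{X_T}$ obtained from~\eqref{eq:snorminter}, or in the $L^1$ Carleson sense through Cauchy--Schwarz against the $L^2$ Carleson parts of $\snorm{u}{X_T^{(0)}}$ and $\snorm{u}{X_T^{(1)}}$. The cubic terms $\snorm{u}{X_T}^2\snorm{v}{X_T}$ in $C_L$ arise from contributions such as $\bigl(\varphi^\prime(u)-\varphi^\prime(v)\bigr)\nabla v\,\absOd{\FG u}^2$, whose $t^2$-weighted supremum factorises as $\norm{u-v}{L^\infty}\cdot(t\abs{\nabla v})\cdot(t\absOd{\FG u}^2)$.

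The main obstacle is bookkeeping: the product rule on a genuinely bilinear off-diagonal quantity produces a proliferation of cross terms, and for each one the $t$-weights must be split between the three factors so that a pointwise sup-bound or a Carleson--Cauchy--Schwarz bound applies. The classification of terms by the degree in $\snorm{u}{X_T}$ and $\snorm{v}{X_T}$ then matches precisely the six monomials entering $C_L$, and summing the resulting estimates completes the proof.
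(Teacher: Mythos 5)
Your proposal follows essentially the same route as the paper: reduce to a $Y_T$ bound via Proposition~\ref{prop:XTYTEstimate}, isolate a $u-v$ factor by polarising $\absOd{\FG u}^2-\absOd{\FG v}^2=\scp{\FG(u-v)}{\FG(u+v)}_{od}$ together with the Lipschitz bound on $\varphi$, distribute $\nabla$ by the product rule, balance $t$-weights against the $X_T^{(0)}$ and $X_T^{(1)}$ factors, and invoke~\eqref{eq:snorminter} for lone $\nabla u$ factors. The paper only exhibits one representative term, $t^2\abs{\scp{\FG u}{\FG\nabla u}_{od}(\varphi(u)-\varphi(v))}$, and asserts the rest are similar, so your more systematic bookkeeping is a fuller account of the same argument rather than a different one.
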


\begin{proof}
    The argument follows from a series of similar estimates similar to \cite{Wang2010}. To illustrate the approach, we provide a detailed derivation of one representative case:  
    \begin{align*}
        &\esssup_{\substack{0 < t < T \\ x \in \R^n}} 
        t^2 \abs{\scp{\FG u}{\FG \nabla u}_{od} (\varphi(u) - \varphi(v))} \\
        \leq &\esssup_{\substack{0 < t < T \\ x \in \R^n}} 
        t^2 \abs{\scp{\FG u}{\FG \nabla u}_{od}} 
        \norm{\nabla \varphi}{L^\infty} \abs{u - v}. \\
        \leq &\esssup_{\substack{0 < t < T \\ x \in \R^n}} 
        t^2 \absOd{\FG u} \absOd{\FG \nabla u} 
        \norm{\nabla \varphi}{L^\infty} \esssup_{\substack{0 < t < T \\ x \in \R^n}} \abs{u - v}. \\
        \leq &\norm{\nabla \varphi}{L^\infty} 
        \snorm{u}{X_T^{(0)}} \snorm{u}{X_{T}^{(1)}} 
        \esssup_{\substack{0 < t < T \\ x \in \R^n}} \abs{u - v}. \\
        \leq &c \norm{\nabla \varphi}{L^\infty} 
        \snorm{u}{X_T}^2 \tnorm{u - v}{X_T}.
    \end{align*}
    In general, some estimates require the use of \eqref{eq:snorminter} due to the appearance of $\nabla u$ on its own. \\
    The same type of estimate applies to all remaining cases, which completes the proof.
\end{proof}

\subsection{Existence for the modified equation}
We first consider 
    \begin{align} \label{eq:modified}
        \begin{cases}
            \partial_t u + \FLhalf u = \varphi(u) \absOd{\FG u}^2 \\
            u(0) = a.
        \end{cases}
    \end{align}
We select $0 < \varepsilon < \frac{1}{c_4}$ with the constant $c_4$ from Corollary \ref{coro:quadratEstimate} such that for the constant $C_L$ from Proposition \ref{prop:localLipschitz} we have 
    \begin{align*}
        C_L(r,r) < \frac{1}{2} \quad \text{for } r < 2\varepsilon.
    \end{align*}
Assuming $[a]_{\mathcal{A}_T} <\eps$ for some $T>0$, we show by induction that for all $l \in \N_0$
    \begin{align*}
        \snorm{u_l}{X_T} \leq 2 \varepsilon,
    \end{align*}
    where the sequence $u_l$ is recursively defined by $u_0=a$ and
    \begin{align*}
        u_l(t) := \semiGroup{t}a + \mathcal{N}_{\varphi}(u_{l-1}).
    \end{align*}
    But through our assumptions on $\varepsilon$ and $a$ we obtain
    \begin{align*}
        \snorm{u_{l+1}}{X_T} \leq \snorm{\semiGroup{s}a}{X_T} + \snorm{\mathcal{N}_{\varphi}(u_l)}{X_T} \leq \varepsilon  + c_4\snorm{u_l}{X_T}^2 \leq 2 \varepsilon
    \end{align*}
   which implies the claim. Consequently, we can apply Proposition \ref{prop:localLipschitz} to infer
    \begin{align*}
        \tnorm{\mathcal{N}_{\varphi}(u_{l+1}) - \mathcal{N}_{\varphi}(u_l)}{X_T} \leq C(\snorm{u_{l+1}}{X_T},\snorm{u_l}{X_T}) \tnorm{u_{l+1} - u_l}{X_T} \leq \frac{1}{2} \tnorm{u_{l+1} - u_l}{X_T}.
    \end{align*}
    Since $X_T$ is a Banach space, standard arguments imply that $u_l$ converges to a solution $u \in X_T$.

\subsection{Spherical values}
To conclude the proof of Theorem~\ref{thm:existenceSph} it remains to show that a solution $u \in X_T$ to \eqref{eq:modified}
with $u(0)=a \in \mathcal{A}$ such that $|a|=1$ satisfies
    \begin{align*}
        \abs{u} = 1 \quad \text{in } [0,T] \times \R^n.
    \end{align*}
Since $\varphi(y) = y$ for $y \in B_{\frac{3}{2}}(0)$ and 
    \begin{align*}
        \norm{u(t)}{L^\infty} \leq \norm{\semiGroup{t-t_0}u(t_0)}{L^\infty} + \norm{\mathcal{N}_{\varphi}(u)(t)}{L^\infty} \leq 1 + c_4\snorm{u}{X_T}^2 \leq \frac{3}{2}
    \end{align*} 
 we have $\varphi(u) = u$ so that $u$ is indeed a solution of the original equation. Taking into account Proposition \ref{prop:MildWeak} and
 the regularity properties of $u \in X_T$ in $\R^n \times (0,T)$, we obtain for $v = \abs{u}^2 - 1$
    \begin{align}\label{eq:calculations}
        \partial_t v &= \partial_t( \abs{u}^2 - 1) = 2u \partial_t u = -2u \FLhalf u + 2 \abs{u}^2 \absOd{\FG u}^2 \\
        &= 2v \absOd{\FG u}^2 - \FLhalf v.
    \end{align}
     Indeed, for the validity of \eqref{eq:calculations} we observe $\FLhalf u \in C^{\frac{1}{2} - \varepsilon}(\R^n)$, for all $\varepsilon > 0$ and thus the fractional Laplacian is represented by a singular integral \cite{Kwasnicki2017}. This immediately follows through Lemma~\ref{lem:BesovInterpolation} and the fact that
    \begin{align}
\FL{s}: B^{t}_{p,q}(\R^n) \to B^{t-2s}_{p,q}(\R^n),
\end{align}
see e.g. Proposition 3.2~\cite{LemariePierre2002}. 
    Thus $v$ solves the initial value problem
    \begin{align*}
        \begin{cases}
            \partial_t v + \FLhalf v = 2v \absOd{\FG u}^2 \\
            v(t_0) = 0
        \end{cases}
    \end{align*}
    in the mild sense taking into account Proposition \ref{prop:MildWeak}.
    It remains to show uniqueness in the class $L^\infty((0,T) \times \R^n)$. Given two such solutions $v$ and $w$ we have
    \begin{align*}
        \sup_{t \in[0,T]}\norm{v(t)-w(t)}{L^\infty} &= \norm{G\left((v - w)\absOd{\FG u}^2\right)}{X_T} \\ &\leq \norm{(v-w)\absOd{\FG u}^2}{Y_T} \leq \sup_{t \in[0,T]}\norm{v(t)-w(t)}{L^\infty} c_4 \snorm{u}{X_T}^2 \\&\leq \frac{1}{2} \sup_{t \in[0,T]}\norm{v(t)-w(t)}{L^\infty}.
    \end{align*}
    Thus, uniqueness follows, and hence $v = 0$.

\subsection{Uniqueness}

\begin{proof}[Proof of Theorem \ref{thm:uniqueness}]
    We show that for two mild solution $u, v$ with $u(0) = v(0)$,
    \begin{align*}
        I := \{t \in [0,T] \, \vert \, u = v \, \text{ on } \, [0,t] \times \R^n\}
    \end{align*}
    is open and closed in $[0,T]$. Closeness follows from the continuity of $u$ and $v$. For the openness we use
    \begin{align*}
        \tnorm{u-v}{X_{T}} \leq C_L(\snorm{u}{X_T},\snorm{v}{X_T}) \tnorm{u-v}{X_T} \leq \frac{1}{2} \tnorm{u-v}{X_T}.
    \end{align*}
    Where we have chosen $\delta = \varepsilon$. Which directly proves the openness of $I$. Moreover, since $u(0) = v(0)$, $I$ is non-empty; hence $I = [0,T]$.
\end{proof}

\subsection{Continuous dependence on initial data}

\begin{proof}[Proof of Theorem \ref{thm:ContDep}]
Let $(a_n)_{n \in \N} \subset \mathcal{V}$ be a sequence and $a \in \mathcal{V}$ such that
\[
    \norm{a_n - a}{L^\infty} + \snorm{a_n - a}{\mathcal{A}_\infty} \to 0 \quad \text{as } n \to \infty.
\]
Let $u_n = v_n + w_n$ and $u = v + w$ be the respective solutions corresponding to $a_n$ and $a$ with $v_n=S(\cdot)a_n$ 
and $v=S(\cdot)a$, which we estimate separately:
\[
    \tnorm{u_n - u}{X_T^{(0)}} \leq \tnorm{v_n - v}{X_T^{(0)}} + \tnorm{w_n - w}{X_T^{(0)}},
\]
where $\tnorm{\cdot}{X_T^{(0)}}= \|\cdot\|_{L^\infty} + \snorm{\cdot}{X_T^{(0)}}$. By Lemma~\ref{lem:initialCond}, we obtain the estimate
\[
    \tnorm{v_n - v}{X_T^{(0)}} \leq c \left( \snorm{a_n - a}{\mathcal{A}_\infty} + \norm{a_n - a}{L^\infty} \right).
\]
For the nonlinear part, we have the Lipschitz estimate
\[
    \tnorm{w_n - w}{X_T^{(0)}} \leq C_{L}^{(0)}(\snorm{u_n}{X_T^{(0)}}, \snorm{u}{X_T^{(0)}}) \tnorm{u_n - u}{X_T^{(0)}},
\]
where $C_{L}^{(0)}$ is the Lipschitz constant considering only the $\snorm{\cdot}{X_T^{(0)}}$ part, similar to Proposition \ref{prop:localLipschitz}. If $\snorm{u_n}{X_T^{(0)}}, \snorm{u}{X_T^{(0)}}$ are small enough, we can ensure $C_{L}^{(0)} < \tfrac{1}{2}$, and thus
\[
    \tnorm{u_n - u}{X_T^{(0)}} \leq 2c \left( \snorm{a_n - a}{\mathcal{A}_\infty} + \norm{a_n - a}{L^\infty} \right) \to 0 \quad \text{as } n \to \infty.
\]
We now choose $N \in \N$ large enough such that
\[
    \snorm{a_n}{\mathcal{A}_\infty}, \snorm{a}{\mathcal{A}_\infty} < \varepsilon \quad \text{for all } n \geq N,
\]
which is possible since $a_n \to a$ in $L^\infty\cap\mathcal{A}$. Therefore, there exists a common $T > 0$ such that all solutions $u_n, u$ exist on $[0,T]$.
\medskip
Now, by Corollary~\ref{coro:quadratEstimate}, we obtain the quadratic estimate:
\[
    0 \leq c_4 \snorm{u_n}{X_T^{(0)}}^2 - \snorm{u_n}{X_T^{(0)}} + \snorm{v_n}{X_{T_1}^{(0)}},
\]
valid for any $T_1 \geq T$. Since the map $T \mapsto \snorm{u_n}{X_T^{(0)}}$ is continuous (as $u_n \in X_{0,T}$), we obtain, for $T \leq T_1$, the bound
\[
    0 \leq \snorm{u_n}{X_T^{(0)}} \leq \frac{1}{2c_4} - \sqrt{ \frac{1}{4c_4^2} - \frac{1}{c_4} \snorm{v_n}{X_{T_1}^{(0)}}}.
\]
Hence, the semi-norms $\snorm{u_n}{X_T^{(0)}}$ are uniformly bounded in terms of $\snorm{v_n}{X_T^{(0)}}, \snorm{v}{X_T^{(0)}}$. Therefore, for $N$ sufficiently large and $T$ sufficiently small, we obtain a suitably small $\tilde{\varepsilon} > 0$ such that
\[
    \snorm{v_n}{X_T^{(0)}}, \snorm{v}{X_T^{(0)}} < \tilde{\varepsilon} \quad \text{which implies} \quad \snorm{u_n}{X_T^{(0)}}, \snorm{u}{X_T^{(0)}} < \delta,
\]
for some $\delta > 0$ chosen such that $C_{L}^{(0)} < \tfrac{1}{2}$. This concludes the proof.
\end{proof}

%%%%%%%%%%%%%%%%%%%%%%%%%%%%%%

\subsection{Proof of Theorem \ref{thm:Q0Included}}

Due to the translation and scaling-invariance of \( Q_0(\R^n) \), it is sufficient to show that  
    \begin{align*}
        \int_0^1 \fint_{B_1(0)} \absOd{\FG (p_t * a)}^2 \dx \dt \leq c \snorm{a}{Q_0}^2.
    \end{align*}
    We split this integral into two parts:
    \begin{align*}
        \int_0^1 \fint_{B_1(0)} \absOd{\FG (p_t * a)}^2 \dx \dt &\leq 
        2\int_0^1 \fint_{B_1(0)} \left(\absOd{\FG (p_t * a)}^{(0,t)}\right)^2 \dx \dt \\&+ 2\int_0^1 \fint_{B_1(0)} \left(\absOd{\FG (p_t * a)}^{(t,\infty)}\right)^2 \dx \dt.
    \end{align*}
    The first part is estimated in a similar way to how one estimates Carleson measures using the BMO norm. The details of the proof are comparable to Theorem 3.3.8 in \cite{GrafakosLoukas2014}. \\
    The key difference is that, for the Fourier argument, we estimate
    \begin{align*}
        \left(\absOd{\FG (p_t * \chi_Q(a - \overline{a}_Q))}^{(0,t)}\right)^2 \leq \absOd{\FG (p_t * \chi_Q(a - \overline{a}_Q)}^2,
    \end{align*}
    and for the growth part, we pull the derivative inside and use the fact that
    \begin{align*}
        \absOd{\FG p_t}^{(0,t)} = \frac{1}{\sqrt{t}} \left(\absOd{\FG p_1}^{(0,1)}\right)_t,
    \end{align*}
    where through Lemma~\ref{lem:growthEstimate} \( \absOd{\FG p_1}^{(0,1)} \) decays as \( \frac{1}{1+\abs{x}^{n+2}} \), which is suited for the second part of Theorem 3.3.8 in \cite{GrafakosLoukas2014}. \\
    That is, we have
    \begin{align*}
        \int_0^1 \fint_{B_1(0)} \left(\absOd{\FG (p_t * a)}^{(0,t)}\right)^2 \dx \dt \leq c\snorm{a}{\mathrm{BMO}}^2 \leq c\snorm{a}{Q_0}^2.
    \end{align*}
    For the second integral, we immediately pull the derivative into the convolution, which leads to  
    \begin{align*}
        \int_0^1 \fint_{B_1(0)} \left(p_t * \absOd{\FG a}^{(t,\infty)}\right)^2 \dx \dt.
    \end{align*}
    We cannot directly apply Lemma \ref{lem:HausdorfYoungUloc} here because both \( p_t \) and  
    \( u := \absOd{\FG a}^{(t,\infty)} \) depend on \( t \). Nevertheless, we obtain a similar estimate as follows. \\
    We decompose \( u = u_1 + u_2 \), where  
    \begin{align*}
        u_1 = u \chi_{\abs{x} \leq 2}, \quad u_2 = u \chi_{\abs{x} > 2}.
    \end{align*}
    The estimate for \( u_1 \) follows from applying the standard Hausdorff-Young convolution inequality, leading to:
    \begin{align*}
        \int_0^1 \fint_{B_1(0)} \left(p_t * u_1\right)^2 \dx \dt \leq \sup_{t \in [0,1]} \norm{p_t}{L^1}^2 \int_0^1 \int_{\R^n} \abs{u_2}^2 \dx \dt.
    \end{align*}
    For \( u_2 \), we proceed as follows:
    \begin{align*}
        \fint_{B_1(0)} (p_t * u_2)^2 \dx &=  \fint_{B_1(0)}
        \left(\int_{\abs{y} \geq 2} \frac{t \abs{u(y)}}{(t^2 + \abs{x-y}^2)^{\frac{n+1}{2}}}\dy\right)^2 \dx \\
        &\leq c \fint_{B_1(0)}\left(\int_{\abs{y} \geq 2} \frac{\abs{u(y)}}{\abs{y}^{n+1}} \dy\right)^2 \dx.
    \end{align*}
    In the last estimate, we used the fact that for $\abs{x} \leq 1 \leq \frac{\abs{y}}{2}$
    \begin{align*}
        \abs{x-y} \geq \frac{\abs{y}}{2}.
    \end{align*}
    Now, applying the remaining integral and using Jensen's inequality with respect to the measure \(\chi_{\abs{y} \geq 2}\frac{\dy}{\abs{y}^{n+1}}\), we obtain  
    \begin{align*}
        \int_0^1 \int_{B_1(0)} \abs{\int_{\abs{y} \geq 2} 
        \frac{\abs{u(y)}}{\abs{y}^{n+1}} \dy}^2 \dx \dt
        &\leq c \int_0^1 \int_{\abs{y} \geq 2} 
        \frac{\abs{u(y)}^2}{\abs{y}^{n+1}} \dy \dt \\
        &\leq c \sup_{x \in \R^n} \int_0^1 \int_{B_1(x)} 
        \abs{u(y)}^2 \dy \dt.
    \end{align*}
    Thus, we have  
    \begin{align*}
        \int_0^1 \fint_{B_1(0)} \left(p_t * \absOd{\FG a}^{(t,\infty)}\right)^2 \dx \dt \leq  c \sup_{x \in \R^n} \int_0^1 \int_{B_1(x)} 
        \left(\absOd{\FG a}^{(t,\infty)}\right)^2 \dy \dt.
    \end{align*}
    By translation invariance, it suffices to show that  
    \begin{align*}
        \int_0^1 \int_{B_1(0)} 
        \left(\absOd{\FG a}^{(t,\infty)}\right)^2 \dx \dt 
        \leq \snorm{a}{Q_0}^2.
    \end{align*}
    We immediately observe that  
    \begin{align*}
        \int_0^1 \int_{B_1(0)} 
        \left(\absOd{\FG a}^{(t,\infty)}\right)^2 \dx \dt 
        &= \int_0^1 \int_{B_1(0)} \int_{\R^n} 
        \chi_{\abs{h} \geq t} \frac{\abs{a(x+h) - a(x)}^2}{\abs{h}^{n+1}} 
        \dh \dx \dt.
    \end{align*}
    Due to the non-negativity of the integrand, we apply Fubini’s theorem to obtain
    \begin{gather*}
        \int_0^1 \int_{B_1(0)} \int_{\R^n} 
        \chi_{\abs{h} \geq t} \frac{\abs{a(x+h) - a(x)}^2}{\abs{h}^{n+1}} 
        \dh \dx \dt \\
        = \int_{B_1(0)} \int_{\R^n} 
        \int_0^1 \chi_{\abs{h} \geq t} \dt \frac{\abs{a(x+h) - a(x)}^2}{\abs{h}^{n+1}} 
        \dh \dx \\
        = \int_{B_1(0)} \int_{\R^n} 
        \min(\abs{h},1) \frac{\abs{a(x+h) - a(x)}^2}{\abs{h}^{n+1}} 
        \dh \dx.
    \end{gather*}
    This integral decomposes into two parts:  
    \begin{align*}
        \int_{B_1(0)} \int_{\R^n \setminus B_1(0)} 
        \frac{\abs{a(x+h) - a(x)}^2}{\abs{h}^{n+1}} \dh \dx 
        + \int_{B_1(0)} \int_{B_1(0)} \frac{\abs{a(x+h) - a(x)}^2}{\abs{h}^n} 
        \dh \dx.
    \end{align*}
    The first integral we immediately obtain
    \begin{align*}
        \int_{B_1(0)} \int_{B_1(0)} \frac{\abs{a(x+h) - a(x)}^2}{\abs{h}^n} \dh \dx 
        &\leq c\snorm{a}{Q_0}^2.
    \end{align*}
    For the second integral, we note that by the embedding \(\mathrm{BMO} \supset Q_0 \), we have
    \begin{align*}
        \int_{B_1(0)} \int_{\R^n \setminus B_1(0)} 
        \frac{\abs{a(x+h) - a(x)}^2}{\abs{h}^{n+1}} \dh \dx 
        &\leq c\snorm{a}{BMO}^2 \leq c\snorm{a}{Q_0}^2.
    \end{align*}
    The second estimate follows from applying the triangle inequality to the averages  $\overline{a}_{B(h)}$ and $\overline{a}_{B(0)}$,  together with the standard bound (see Exercise 3.1.6 in \cite{GrafakosLoukas2014}):  
    \begin{align*}
        \abs{\overline{a}_{B(h)} -\overline{a}_{B(0)}}^2 
        \leq C_n \log(\abs{h})^2 \snorm{a}{\mathrm{BMO}}^2, 
        \quad \text{for large } h.
    \end{align*}
    Since we have established both estimates, we conclude  
    \begin{align*}
        \int_0^1 \int_{B_1(0)} 
        \left(\absOd{\FG a}^{(t,\infty)}\right)^2 \dx \dt 
        \leq c \snorm{a}{Q_0}^2.
    \end{align*}
    The proof is complete.

%%%%%%%%%%%%%%%%%%%%%%%%%%%%%%

\section*{Acknowledgements}
This work is funded by the Deutsche Forschungsgemeinschaft (DFG, German Research Foundation) – project number \href{https://gepris.dfg.de/gepris/projekt/442047500}{442047500} – through the Collaborative Research Center \href{https://sfb1481.rwth-aachen.de/}{"Sparsity and Singular Structures" (SFB 1481)}. KK further thanks \href{https://www.math1.rwth-aachen.de/cms/MATH1/Der-Lehrstuhl/Team/Wissenschaftlich-Beschaeftigte/~bjomyl/Valentin-Linse/}{Valentin Linse} for the fruitful discussions.

\bibliographystyle{plainurl}
\bibliography{bib.bib} % Add your .bib file here

\end{document}